\documentclass[11pt]{article}

\usepackage[sort, comma ]{natbib}
\usepackage{graphicx}
\usepackage{amsmath, amssymb, dsfont, amsthm}
\usepackage{booktabs, multirow}
\usepackage{colortbl, color, array}

\usepackage{float}

\usepackage{mathtools}
\usepackage{graphics}
\usepackage{epstopdf}
\usepackage{multirow}
\usepackage{epsfig}
\usepackage{mathrsfs}
\usepackage{float}
\usepackage{lineno}
\usepackage{natbib}

\graphicspath{{Figures/}}
\usepackage[]{graphicx}
\chardef\bslash=`\\ 
\newcommand{\ntt}{\normalfont\ttfamily}

\newcommand{\pkg}[1]{{\protect\ntt#1}}

\hfuzz1pc 

\DeclareMathOperator{\diag}{diag}
\newcommand{\cmle}{\widehat{\theta}_n^c}
\newcommand{\cmlebeta}{\widehat{\beta}_n^c}
\newcommand{\g}{\cellcolor[gray]{0.95}}

\usepackage{color}
\definecolor{purple}{rgb}{0.55,0.2,0.90}

\newtheorem{thm}{Theorem}[section]

\let\proglang=\textsf

\begin{document}
\title{Multiple Comparisons using Composite Likelihood in Clustered Data}
\author{Mahdis Azadbakhsh, Xin Gao and Hanna Jankowski}
\date{York University \\[\baselineskip] \today}
\maketitle

\begin{abstract}
We study the problem of multiple hypothesis testing for correlated clustered data.  As the existing multiple comparison procedures based on maximum likelihood estimation could be computationally intensive,  we propose to construct multiple comparison procedures based on composite likelihood method. The new test statistics account for the correlation structure within the clusters and are computationally convenient to compute. Simulation studies show that  the composite likelihood based procedures maintain good control of the familywise type I error rate in the presence of intra-cluster correlation, whereas ignoring the correlation leads to erratic performance. Using data arising from a depression study, we show how our composite likelihood approach makes an otherwise intractable analysis possible.   
\end{abstract}
\maketitle                   

\section{Introduction}

{The prevalence of depression in seniors estimated by the World Health Organization varies between 10\% to 20\%  \citep{baru}.  Understanding the relationship between depression and other health factors can help prevent the disease and alleviate the symptoms. The health and retirement study (HRS) conducted by the University of Michigan is a longitudinal study which measured various aspects of health, retirement and aging, including the subject's depression status. In this study, seniors were measured every two years from 1994 to 2012.  The objective of our analysis is to estimate the effect of several health factors known to be associated with depression status and compare the effect sizes of different factors.  Multiple comparisons on the effect sizes will clarify the relative importance of different factors to the disease. For example, the factor of being sleepless and the factor of smoking both are shown to attribute to the occurrence rate of depression. One might question whether or not they are equally important or one factor is more important than the other for development of the disease. Therefore, to fully understand the effects of the health factors, we perform all pairwise comparisons on their effect sizes.  }

{The repeated binary measurements of depression status observed in this data set are correlated within individuals. These repeated measurements can be viewed as clustered data since they are recorded from the same experimental unit multiple times.  Clustered data examples arise in many other situations, including measurements coming from siblings or same pedigrees, or measurements taken in close proximity to each other in spatial data.  Ignoring existing correlations within clusters leads to invalid individual or multiple inferences. }

{When performing multiple comparisons in clustered data, one should therefore, take into account the correlation structure within the clusters.   However, full likelihood analyses on such data often encounter computational challenges. For a repeated binary measurement data, the distribution can be described by multivariate probit or quadratic exponential. Evaluating the full likelihood of a multivariate probit model involves multi-dimensional integration, which quickly becomes computationally prohibitive. For the quadratic exponential model, the normalizing constant has to be computed through summation of all possible configurations of the clustered data, and here again computational intensity increases with the cluster size.  We can avoid this computational burden by using a composite likelihood approach.}

{Composite likelihood methods are extensions of the likelihood method that project high-dimensional likelihood functions to low-dimensional ones \citep{lind, coxReid}. This dimension reduction is achieved by compounding valid marginal or conditional densities.  It has been shown that, under regularity conditions, the composite likelihood estimator has desirable properties, such as consistency and asymptotic normality \citep{lind, vari, variReid, coxReid}. This makes it an appealing alternative in inferential procedures.   Furthermore, composite likelihood is more computationally convenient than full likelihood at a cost of some loss of efficiency.  The magnitude of this loss depends on the dimension of the multivariate vector and its dependency structure. Composite likelihood methodology has been applied to numerous statistical problems  \citep{zhaoJoe,renaMole,geysMole}, however,  the potential of composite likelihood in multiple testing has yet to be explored. 
There is a great need to develop a procedure to integrate multiple hypothesis testing procedures with the composite likelihood methodology.  }

Multiple testing procedures have been developed to control the overall type I error rate when the number of tests is greater than one \citep{hoch, bret}.  The classical Bonferroni method is the simplest procedure to adjust the overall type I error rate, but it is well-known to be very conservative. The Dunn-Sid\'ak procedure \citep{sida} generalizes the Bonferroni procedure by using a slightly less conservative $p$-value threshold for each comparison. \citet{sche} established  a method for testing all possible linear comparisons among a set of normally distributed variables, which tends to be over-conservative for a finite family of multiple comparisons.  Several stage-wise procedures have also been proposed to improve the power.  \citet{sime} modified the Bonferroni procedure based on ordered p-values. \citet{holm} proposed a multi-stage procedure that adjusts the family-wise error rate in each step using the number of remaining null hypotheses. \citet{homm}  suggested a stagewise rejective multiple test based on the principle of closed test procedures.  All of these methods are less conservative and therefore more powerful than the Bonferroni method.  However, it is difficult to construct simultaneous confidence intervals based on stage-wise procedures.    As another alternative, \citet*{hothBretWest}  proposed to use quantiles of the multivariate normal and multivariate t-distribution to perform multiple comparisons  in parametric methods. This method takes into account the correlation structure of the test statistics and offers sharper control of the family-wise type I error rate. The approach has been employed in many parametric and nonparametric settings to provide both multiple inferences and simultaneous intervals \citep{koniHothBrun, hothBretWest, koniBosiBrun}. {Recently, there has been considerable interest devoted to the problems of  large-scale multiple testing applied on the analysis of high dimensional data (see, for example, \citet{meijGoem, benjHoch}). New decision-analytic based multiple testing procedures \citep{lisoBurm}  have also been proposed to design multiple testing procedure to minimize a predefined utility function.}

In this paper, we propose a new procedure to handle multiple testing scenarios in computationally intensive or intractable likelihood scenarios.  We do this by combining multiple testing methods with the dimension-reduction capabilities of inference based on composite likelihood.  We explore in detail different multivariate models for correlated clustered data including the multivariate normal, multivariate probit, and quadratic exponential models to illustrate our multiple comparisons approach.  Although the proposed composite likelihood methodology can be combined with many multiple testing procedures, including the recent development in large scale multiple testing or decision-analytic based procedures, in this paper, we focus on combining the Bonferroni, Scheff\'e, Dunn-Sid\'ak, Holm, and the multivariate normal quantile (MNQ) of \citet{hothBretWest} methods with both univariate and conditional composite likelihood formulations.  Among these methods, the multivariate normal quantile threshold appears to have the best control of the familywise type I error rate in most simulation settings. 
 
{The structure of this paper is as follows:  In Section~\ref{sec:methods}, we develop our composite likelihood based test statistics for multiple inferences and establish their asymptotic properties.  In Section~\ref{sec:three-models}, we provide details on how to apply the general approach on a variety of multivariate models. In Section~\ref{sec:simulation}, we conduct simulation studies to evaluate empirical performance of the proposed method.  Finally, we analyze the depression data set to demonstrate the practical utility of the method.  This is done in Section~\ref{sec:data}. We conclude the paper with a brief discussion of the results.} 

\section{Multiple Comparisons Procedures based on Composite Likelihood}\label{sec:methods}

Let $\{f(Y;\theta),\theta\in\Theta\},$ where $\theta=(\theta_1,\dots,\theta_p)^T,$ be a parametric statistical model with parameter space $\Theta\subset \mathbb{R}^p.$  Let $Y=(y_1^T,\cdots,y_n^T)$ denote the response variables, where $y_i=(y_{i1},\cdots,y_{im_i})^T$  is the vector of observations from cluster $i$, $i = 1,\cdots, n$ from a study population.  It is assumed that observations from different clusters are independent, whereas observations from the same cluster may be dependent.  Note that each cluster is thus of size $m_i,$ for an overall sample size of $\sum_{i=1}^n m_i.$   In this work, it is assumed that the cluster size, $m_i$, is uniformly bounded.  

Let  $$C=C_{c\times p}=(C^{(1)},C^{(2)},\cdots,C^{(c)})^T$$ denote the contrast matrix.  A family of $c$ linear combinations of the parameters can then be specified by $C\theta$.   Let $H_{0i}$ denote the hypothesis that $C^{(i)}\theta=0,$ for $i=1, \ldots, c.$  We focus here on jointly testing the family of hypotheses $H_{0i}, i=1, \ldots, c.$ In multiple testing, the family-wise type I error (FWER) rate is the probability of falsely rejecting at least one individual null hypothesis.  It is said that one has weak control of FWER if the FWER $\leq \alpha$  when all of the null hypotheses are true,
whereas one has  strong control of FWER if the FWER $\leq \alpha$ under any combinations of null hypotheses and alternative hypotheses.   

{Most existing multiple comparisons methods estimate the parameters based on the full likelihood.  However, for some multivariate distributions, maximizing the full likelihood function can be computationally challenging.   Composite likelihood is an alternative method that has attracted much attention in recent years \citep{besa, lind, vari, variReid}.   A composite likelihood is a compounded form of marginal or conditional likelihoods, which is computationally easier to maximize.  The general formulation for composite likelihood may be written as follows:  Let $A_1,\dots,A_K$ be a suitably chosen collection of index sets, with $A_k\subseteq \{(i,j),i=1, \ldots, n,  j=1, \ldots, m_i\}.$  For each $A_k,$ a weight $w_{A_k}$ is also chosen/specified.  The composite likelihood function is then defined as 
\begin{eqnarray*}
CL(\theta;Y) &=& \prod_{k=1}^K f(y_{A_k}; \theta)^{w_{A_k}},
\end{eqnarray*} 
where $f(y_{A_k}; \theta)$ is the density for the subset vector $y_{A_k}.$  For example, to obtain the so-called univariate composite likelihood $CL(\theta;Y) = \prod_{i=1}^n \prod_{j=1}^{m_j} f(y_{ij})$, one chooses $A_k=\{(i,j)\}$ as the single index pairs and weights $w_{A_k}\equiv 1.$  (Note that the univariate composite likelihood is equivalent to the full likelihood if the $y_{ij}$ are independent.)
The so-called conditional composite likelihood is formulated as $CL(\theta;Y) = \prod_{i,j} f(y_{ij}|y_{i(-j)})=\prod_{i,j} f(y_{i})/f(y_{i(-j)}),$   where $ y_{i(-j)}$ denotes the sub-vector $ y_i$ with its $j$th element removed. This composite likelihood uses index sets $\{(i,1),\dots,(i,m_i)\}$ with weight $w_{A_k}=1$ and index sets $\{(i,1),\dots,(i,j-1),(i,j+1)$ $,\dots,(i,m_i) \}$ with weight $w_{A_k}=-1$.   As this example shows, the index sets $A_k$ need not be disjoint.}

The maximum composite likelihood estimate (MCLE) is defined as 
\begin{eqnarray*}
\cmle&=&\text{argmax}_{\theta\in \Theta} CL(\theta;Y).
\end{eqnarray*}  \citet{xuReid} give precise conditions under which $\cmle$ is consistent for $\theta.$   Under appropriate assumptions,  $\sqrt{n}(\cmle-\theta)$ is asymptotically normally distributed with mean zero and limiting variance given by the inverse of the the Godambe information matrix \citep{lind, variVido}, where 
\begin{eqnarray}\label{line:godambe}
G^{-1}(\theta) &=& H^{-1}(\theta) J(\theta) H^{-1}(\theta),
\end{eqnarray}

with $H(\theta)= \lim_nE(-cl^{(2)}(\theta;Y))/n$ and $J(\theta)=  \lim_n\mbox{var}(cl^{(1)}(\theta;Y))/n.$   Here, $cl^{(1)}$ is the vector of first derivatives and $cl^{(2)}$ is the matrix of second order derivatives of $cl(\theta;Y)=\log CL(\theta;Y) $ with respect to $\theta$.  The $H(\theta)$ can be estimated as the negative Hessian matrix evaluated at the maximum composite likelihood estimator, whereas the matrix $J(\theta)$ can be estimated as the sample covariance matrix of the composite score vectors. Both estimators $\widehat{H}_n$ and $\widehat{J}_n$ are consistent \citep{variVido}.

Consider the hypothesis test on a family of linear combinations of the parameters: $\{H_0:C \theta=0\}.$ Denote by $\Gamma=G^{-1}(\theta)$ the inverse Godambe information matrix, and let $\widehat \Gamma_n$ denote a consistent estimator of $\Gamma$ with $
\widehat \Gamma_n = \widehat H^{-1}_n \widehat J_n \widehat H^{-1}_n.$  We propose the following test statistics for our hypothesis test
\begin{eqnarray}\label{def:htests}
T_{i,n} &=& \frac{{C^{(i)}}^T\cmle}{\sqrt{\left({C^{(i)}}^T{\widehat \Gamma_n}C^{(i)}\right)/n}}, \ \ i=1, \ldots, c. 
\end{eqnarray}
\begin{thm}\label{thm:limit}
Suppose that the following conditions hold
\begin{enumerate}
\item $\sqrt{n}(\cmle-\theta) \Rightarrow N(0, G^{-1}(\theta)),$
\item $H_0$ is true, and 
\item $\widehat \Gamma_n \stackrel{p}{\rightarrow} G^{-1}(\theta).$
\end{enumerate}
Then the limiting distribution of $T_n=(T_{1,n},\cdots,T_{c,n})^T$ is multivariate normal $N_c(0,V),$ where 
\begin{eqnarray}\label{def:V}
V&=& \diag(D)^{-1/2} D \diag(D)^{-1/2}, \ \ \ D \ =\ C G^{-1}(\theta) C^T.
\end{eqnarray}
Furthermore, since $V_{i,i}=1,$ the marginal asymptotic distribution of each individual $T_{i,n}$ is standard normal.  
\end{thm}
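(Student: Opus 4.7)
The plan is to view $T_n$ as a random diagonal rescaling of the linear transformation $\sqrt{n}\,C\cmle$, and then apply the multivariate Slutsky theorem. The proof should be short because the heavy lifting --- the asymptotic normality of $\cmle$ --- is imported directly from condition~(1).

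First I would apply the continuous mapping theorem to condition~(1) using the linear map $x \mapsto Cx$ to obtain $\sqrt{n}\,C(\cmle - \theta) \Rightarrow N_c(0, D)$ with $D = C G^{-1}(\theta) C^T$ as in \eqref{def:V}. Under condition~(2), $C\theta = 0$, so this simplifies to
\[
\sqrt{n}\,C\cmle \;\Rightarrow\; N_c(0,D).
\]
Next I would rewrite the test statistics collectively as $T_n = A_n \cdot \sqrt{n}\,C\cmle$, where $A_n$ is the diagonal matrix whose $i$th diagonal entry is $\bigl({C^{(i)}}^T \widehat\Gamma_n C^{(i)}\bigr)^{-1/2}$. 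Condition~(3) together with the continuous mapping theorem (coordinate-wise, using that $x \mapsto x^{-1/2}$ is continuous on $(0,\infty)$) gives $A_n \stackrel{p}{\to} \diag(D)^{-1/2}$. The multivariate Slutsky theorem then yields
\[
T_n \;\Rightarrow\; \diag(D)^{-1/2}\, Z \;\sim\; N_c\!\bigl(0,\, \diag(D)^{-1/2}\, D\, \diag(D)^{-1/2}\bigr) \;=\; N_c(0,V),
\]
with $Z \sim N_c(0,D)$. The marginal statement is immediate: by construction $V_{ii} = D_{ii}/D_{ii} = 1$, so each $T_{i,n}$ has asymptotic $N(0,1)$ marginal.

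The only potential obstacle is a mild regularity check: one needs each $D_{ii} = {C^{(i)}}^T G^{-1}(\theta) C^{(i)} > 0$ so that $\diag(D)^{-1/2}$ is well-defined and $A_n$ is eventually well-defined with probability tending to one. This is satisfied whenever $G^{-1}(\theta)$ is positive definite (an implicit assumption already present in the CLT of condition~(1)) and no contrast vector $C^{(i)}$ is zero --- both standard requirements for the hypotheses to be meaningful. Beyond this, the argument is a routine application of the continuous mapping theorem and Slutsky's theorem in the multivariate setting.
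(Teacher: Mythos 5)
Your proof is correct and follows essentially the same route as the paper, which simply cites \citet{hothBretWest} for the asymptotic normality of $T_n$: the standard argument there is exactly your continuous-mapping-plus-Slutsky decomposition $T_n = A_n\,\sqrt{n}\,C\cmle$ with $A_n \stackrel{p}{\to} \diag(D)^{-1/2}$. Your explicit treatment, including the check that each $D_{ii}>0$ so that $\diag(D)^{-1/2}$ is well-defined, spells out details the paper leaves to the citation, but it is the same proof.
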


\begin{proof}[Proof of Theorem \ref{thm:limit}]
Asymptotic normality of $T_n$ is shown as in \citet{hothBretWest}. Moreover, as the diagonal elements of the matrix $V$ are equal to one, the individual test statistics $T_{i,n},~i=1,\ldots,c,$ are standard normal.  Therefore, the $V$ matrix  is the correlation matrix for $ C\cmle$.   
\end{proof}

In practice, we estimate $V$ by plugging $\widehat \Gamma_n$ as a consistent estimator of $G^{-1}(\theta)$ into \eqref{def:V}. This  results in a consistent estimator of $V$. It is worthy to point out that the test statistics we propose here are Wald-type of statistics which are not invariant to reparametrization. Under reparametrization, the new statistics follow the same type of limiting distributions, but the values of the statistics are not the same. This is a standard limitation associated with Wald-type statistics.

To apply various multiple testing procedures, we propose to apply the corresponding rejection criterions based on the composite likelihood test statistics $\{T_{i,n}\}$ derived above. In the numerical analysis, we examine the performance of composite likelihood test statistics with  four popular multiple testing methods:  Bonferroni,  Dunn-Sid\'ak \citep{sida}, Holm \citep{holm}, and the simultaneous multiple comparison test based on multivariate normal quantiles (MNQ) of \citet{hothBretWest}.    The first three methods are applied to the marginal distributions of $T_{i,n}$ based on the asymptotic theory in Theorem~\ref{thm:limit}, whereas  MNQ uses a cutoff based on the multivariate quantile based on the full variance matrix $V$ in \eqref{def:V}.     

\section{Three multivariate models}\label{sec:three-models}

To showcase our methodology, we consider three different multivariate distributions: The multivariate normal, multivariate probit, and quadratic exponential distributions.  A further, fourth, distribution (the skewed multivariate gamma) is considered within the supplementary material.   For the first two distributions, the composite likelihood is constructed as sum of univariate likelihoods, whereas for the third distribution, the composite likelihood is constructed as conditional likelihood.  All details for the gamma example are given in the supplementary files.  Naturally, our methodology is not limited to these distributions and can be applied to other distributions, given that the composite likelihood is available and  that the conditions of Theorem~\ref{thm:limit} hold.

In order to include covariates into our modelling scheme, let $X_{i}$ denote an $m_i\times p$ matrix containing the values of $p$ covariates for the $m_i$ individuals in the $i^{th}$ cluster and $\beta=(\beta_{1},\dotsc,\beta_{p}){^T}$ denote the vector of regression coefficients.   Let {$\vec{x}_{ij}$} denote the $j^{th}$ row of the matrix $X_i$ (this is the vector of covariates for individual $j$ in cluster $i$).

\subsection{ Multivariate Gaussian distribution}

{Let $\{(y_{i},X_{i}),\ i=1,\cdots n\},$ denote the response and covariates arising from a multivariate normal model, with 
$
y_{i} = X_{i} \beta + \epsilon_i, i=1, \ldots, n,
$
and $m_i=m.$ We assume that \textrm{$\epsilon_i \thicksim N_m(0, \Sigma)$} where $\Sigma=(\sigma_{ij}), i,j=1,\dots, m,$ is an arbitrary covariance matrix. The univariate composite likelihood is thus equal to
\begin{eqnarray*}\label{line:cl1_norm}
&cl\left(\beta\right) \ \ = \ \ \sum_{i=1}^n \sum_{j=1}^m (-\frac{1}{2}\log(2\pi\sigma_{jj})-\frac{1}{2\sigma_{jj}^2}(y_{ij}-\vec{x}_{ij}\beta)^2),&
\end{eqnarray*} where the $\sigma_{jj}$'s are nuisance parameters.  The Hessian matrix and variability matrix are, respectively,  $H(\beta) = n^{-1}\left(\sum_{i=1}^n X_i^T W X_i\right)$ and $
J(\beta) =n^{-1}\left(\sum_{i=1}^n X_i^{T}W \, \Sigma \, W X_i\right),$ with $W=\diag(\Sigma)^{-1}.$ To estimate the regression coefficients, we employ an iterative algorithm:   Given the current estimate for the nuisance parameters $\sigma_{jj}$'s, we maximize the composite likelihood to obtain an estimate of $\widehat{\beta}^c_n= (\sum_{i=1}^{n} X_{i}^{T}W X_{i})^{-1}\sum_{i=1}^{n} X^{T}_{i} W Y_{i}$, and given a current estimate for $\beta,$ we use the sample covariance matrix of residuals to estimate $\Sigma.$  Based on the estimates $\widehat \beta^c_n$ and $\widehat{\Sigma},$ we obtain estimates for $H(\beta)$ and $J(\beta)$ with $W$ being replaced by its estimate $\widehat {W}= \diag(\widehat{\Sigma})$. }

\subsection{Multivariate probit model}\label{sec:probitdef}

{Let $y_{i}^* =X_{i}\beta +\epsilon_{i}$ with $\epsilon_{i} \thicksim N_m(0, \Sigma)$ and $\Sigma=\sigma R,$ where $R$ is an $m\times m$ correlation matrix.   The variables $y_{i}^*$ are the latent response variables, and  their  dichotomized version of the latent variable with
$ y_{ij}=I(y_{ij}^*>0),\ \ j=1,\cdots,m$
yield the multivariate probit model.  We therefore have that 
$
P(y_{ij}=1|X_i) \ \ = \ \ \Phi(\vec{x}_{ij}\beta/\sigma)$
where $\Phi$ denotes the univariate  standard normal cumulative distribution function. 
It follows that the parameters $\beta$ and $\sigma$ are not fully identifiable in the model, and we can only estimate the ratio $\beta/\sigma$.  To simplify notation, $\sigma$ is set equal to 1 in what follows.   The univariate composite log-likelihood function of the probit model is then formulated as
\begin{eqnarray*}
&cl(\beta;Y) \ \ = \ \ \sum_{i=1}^{n}\sum_{j=1}^m [ y_{ij} \ \log \ \Phi\left(\vec{x}_{ij} \beta\right)+(1-y_{ij}) \ \log \left(1- \Phi\left(\vec{x}_{ij}\beta\right)\right) ].&
\end{eqnarray*}
Denoting $\mu_{ij}=P(y_{ij}=1|X_i),$ and $\mu_i=(\mu_{i1},\dots,\mu_{im})^T,$ we have
\begin{eqnarray*}
&cl^{(1)}(\beta;Y) \ \  =  \ \ \sum_{i=1}^{n} \left(\frac{\partial \mu_i}{\partial_\beta}\right)^T \Pi_i^{-1}(y_i-\mu_i),&
\end{eqnarray*}
where $\Pi_i=\mbox{diag}(\mbox{var}(y_{i1}),\cdots,\mbox{var}(y_{im}))$, and $\mbox{var}(y_{ij})= \mu_{ij}(1-\mu_{ij})$.   This yields
\begin{eqnarray*}
&\hspace{-1cm}H(\beta) = n^{-1} \sum_{i=1}^n  \left(\frac{\partial \mu_i}{\partial_\beta}\right)^T  \Pi_i^{-1}\left(\frac{\partial \mu_i}{\partial_\beta}\right) \ \ \mbox{ and } \ \ J(\beta) = n^{-1} \sum_{i=1}^n  \left(\frac{\partial \mu_i}{\partial_\beta}\right)^T  \Pi_i^{-1} \, \mbox{cov}( y_i) \, \Pi_i^{-1} \left(\frac{\partial \mu_i}{\partial_\beta}\right).&
\end{eqnarray*}
To find the estimates $\widehat \beta_n^c,$ we use the Newton-Raphson algorithm. Denote 
$\widehat \mu_{in}=\{\widehat \mu_{i1n}, \widehat \mu_{i2n}, \ldots, \widehat \mu_{i{m}n}\}^T,$
where $\widehat{\mu}_i= \Phi(X_i\widehat\beta_n^c)$. Let $\widehat \Pi_{in}$ denote the estimator of $\Pi_i$ obtained by substituting $\widehat \mu_{ijn}$ for $\mu_{ij}.$  We estimate $H(\beta)$ and $J(\beta)$ as
\begin{eqnarray*}
&\hspace{-2.4cm}\widehat{H}_n \ \ = \ \ n^{-1}\sum_{i=1}^n  (\left.\frac{\partial \mu_i}{\partial_\beta}\right|_{\widehat \beta_n^c})^T  \widehat \Pi_{in}^{-1}(\left.\frac{\partial \mu_i}{\partial_\beta}\right|_{\widehat \beta_n^c})&\\
&\widehat{J}_n \ \ = \ \ n^{-1}\sum_{i=1}^{n} (\left.\frac{\partial \mu_i}{\partial_\beta}\right|_{\widehat \beta_n^c})^T\widehat \Pi_{in}^{-1} \ \ \widehat{\mbox{cov}}_n( y_i) \ \ \widehat \Pi_{in}^{-1}(\left.\frac{\partial \mu_i}{\partial_\beta}\right|_{\widehat \beta_n^c}),&
\end{eqnarray*}
calculating the empirical variance as $\widehat{\mbox{cov}}_n(y_i)= (y_i -\widehat{\mu}_{in})(y_i - \widehat{\mu}_{in})^T.$ }

\subsection{Quadratic Exponential Model}\label{sec:QE}
The quadratic exponential model is a popular tool used to model clustered binary data with intra-cluster interactions \citep{geysMole}.   In this model, the binary observations take values $y_{ij} \in \{-1,1\}$ and the joint distribution is given by
\begin{eqnarray}\label{line:QEmodel}
&f_{Y}(y_i) \ \ \propto \ \ \exp\left\{\sum_{j=1}^{m_i}\mu_{ij}^*y_{ij} + \sum_{j< j'}w_{ijj'}^*y_{ij}y_{ij'}\right\}, &
\end{eqnarray}
where $\mu_{ij}^*$ is a parameter which describes the main effect of the measurements and $w^*_{ijj'}$ describes the association between pairs of measurements within the cluster $y_i$.  Independence corresponds to the case that $w^*_{ijj'}=0$ and positive or negative correlation corresponds to $w^*_{ijj'}>0$ or $w^*_{ijj'}<0$, respectively.   For simplicity, we consider the case that  $\mu_{ij}^*=\mu_i^*$ and $w_{ijj'}^*=w_i^*$, noting that our methodology can be readily applied to the general scenario as well. Under this simplification, \citet{moleRyan}, showed that the joint distribution can be equivalently written in terms of $z_i=\sum_{j=1}^{m_i} 1(y_{ij}=1)$ (the number of successes in the $i$th cluster) as 
$\label{line:MRmodel}
f_{Y}(y_i) \propto \exp\{\mu_i z_i - w_iz_i(m_i - z_i)\},
$
where $w_i= 2w_i^*$ and $\mu_i= 2\mu_i^*.$ 

Specifying the normalizing constant in \eqref{line:QEmodel} is famously difficult, but also necessary to compute the full likelihood function.  It is therefore desirable to use an alternative approach, one which does not involve such an intensive calculation.  Replacing the joint distribution with the conditional distributions leads to a conditional composite likelihood function
$
cl(\mu, w;Y)=\sum_{i=1}^n \sum_{j=1}^{m_i} \log f(y_{ij}|\{y_{ij'}\},j'\neq j),
$
which does not require computation of the normalizing constant.  

We now define two conditional probabilities
\begin{eqnarray*}
&p_{is} = \frac{\exp\{\mu_i -w_i(m_i -2z_i +1)\}}{1+\exp\{\mu_i -w_i(m_i -2z_i +1)\}}, \ \ \  \ \ \ \ \ 
p_{if}  = \frac{\exp\{-\mu_i +w_i(m_i -2z_i -1)\}}{1+\exp\{-\mu_i +w_i(m_i -2z_i -1)\}}. &
\end{eqnarray*}
Heuristically, $p_{is}$ is the conditional probability of one more success, given $z_i-1$ successes and $m_i - z_i$ failures, while $p_{if}$ is the conditional probability of one more failure, given $z_i$ successes and $m_i - z_i-1$ failures.  Note that $p_{if} \neq 1-p_{is},$ because of the term $m_i-2z_i \pm 1.$  The composite likelihood can now be expressed as 
$\label{line:CLexp}
cl(\mu, w;Y) =\sum_{i=1}^n \left( z_i \log p_{is}+(m_i -z_i) \log p_{if} \right).
$

This special form of the composite likelihood means that a logistic regression approach can be used to estimate the parameters.  We model a covariate effect by using the linear model $\mu_i=X_i\beta$, with ${w_i=w}$ interpreted as an additional parameter.   That is, for the parameter $w,$ the value of the covariate is set to $-(m_i-2z_i+1)$ when $y_{ij}=1$ and $-(m_i-2z_i-1)$ when $y_{ij}=-1$.   This allows us to obtain CMLE estimates of both $\beta$ and $w$ using iterative re-weighted least squares, commonly used to solve logistic regression maximization problems.
To estimate the covariance of $\cmlebeta$, we computed $\widehat{J}_n$ as the empirical variance of the score vector, 

plugging in estimates of $\mu_i^*, w^*$ throughout.  The Hessian matrix $\widehat{H}_n$ is estimated using the result from fitting the logistic model in \proglang{R}, see \citet{geysMole}.

\section{Simulation Results}\label{sec:simulation}

\begin{table}[b!]
\caption{Multiple comparison methods considered:}\label{table:simstab}
\begin{center}
\begin{tabular}{ccc}
\toprule[1.5pt]
\multirow{2}{*}{CASE} &multiple comparison & \multirow{2}{*}{$\widehat \Gamma_n$} \\
 &method &  \\
\midrule[1.5pt]
(a) & MNQ &  \multirow{4}{*}{$\widehat H_n^{-1}\widehat J_n \widehat H_n^{-1}$}\\
(b) & Bonferroni &  \\
(c) & Dunn-Sid\'ak  &  \\
(d) & Holm  &  \\
(e) &Scheff\'e &\\ \midrule
(f) & MNQ ``naive" &  $\widehat H_n^{-1}$\\
\bottomrule[1.5pt]
\end{tabular}
\end{center}
\end{table}

We evaluate the validity of our proposed approach on three different multivariate models from Section 3 using simulations. We test two different global null hypotheses on the regression
coefficients  $\beta_1,\cdots,\beta_p$: many-to-one comparisons,
$H_{0}:\cap_{i=2}^p\{\beta_1=\beta_i\}$;  and all pairwise
comparisons $H_{0}:\cap_{1\leq i,j\leq p, i\neq j}
\{\beta_i=\beta_j\}$. The results for many-to-one comparisons are summarized here while the results for all pairwise comparisons are provided in the supplementary material.  We choose a collection of different types of multiple testing methods including one-step methods (Bonferroni and Dunn-Sid\'ak), a stagewise (Holm), a projection method (Scheff\'e), and the MNQ method based on the multivariate distribution of test statistics. For the MNQ method, the critical values can
be obtained using the \proglang{R} package \pkg{mvtnorm}
\citep{hoth}.

Part of our goal is to show practitioners what happens if the correlation structure in the clustered data is ignored.   To this end, we also include a ``misspecified" scenario, where independence is erroneously assumed within the clusters.    Due to the specific composite likelihood methods we use (univariate marginals and univariate
conditionals), such a misspecification is equivalent to  $H(\theta)=J(\beta)$ in~\eqref{line:godambe}.    This
results in an estimate of $\widehat \Gamma_n = \widehat H_n^{-1}$ in
Theorem~\ref{thm:limit}.   This misspecified scenario is included
for comparison, and we consider it only with the MNQ multiple
comparison method (that is, the MNQ cutoff is calculated based on
$V$ estimated by plugging in $\widehat \Gamma_n = \widehat
H_n^{-1}$).   Throughout, it is referred to as the ``naive"
approach.   Overall, we therefore consider six different
approaches, and these are provided in Table~\ref{table:simstab}.

{\indent In our simulations, we study the three models described in the previous section.   For each model, a different sample size is needed for our asymptotic approximations to be valid.  We determine this sample size with an initial simulation, before we proceed with our more in-depth investigations.   For each simulation setting,  $10\, 000$ simulated data sets were generated and the family-wise type I error rate was set to $0.05$. The standard deviation for the observed FWER is hence approximately $0.002$.   These preliminary simulation results are given in Table~\ref{tab:ens}.  We observe that $n=200, 500$ and $700$ are required for the multivariate normal, multivariate probit and quadratic exponential models to maintain FWER within two standard deviations away from 0.05, respectively.  These are the sample sizes used for the simulation results which follow. }

\begin{table}[h!]
\renewcommand{\arraystretch}{1.35}
\caption{FWER for different sample sizes}
\label{tab:ens}
\begin{center}
\begin{tabular}{@{}cccccc@{}} \hline
\toprule[1.5pt] \multirow{3}{*}{model} & &Sample size\\
\cmidrule{2-6}
      &  200 & 500&700& 1000   & 4000\\
\midrule[1.5pt]
multivariate normal                 &  0.0509   & 0.0492  & 0.0483 & 0.0495 & 0.050\\
multivariate probit                 &  0.0576   & 0.0501  & 0.0511 & 0.0506   &0.0511 \\
quadratic exponential               &  0.0580   & 0.0543  & 0.0519 & 0.0520  & 0.0504\\
 \bottomrule[1.5pt]
\end{tabular}
\end{center}
\end{table}

{To evaluate the power of each of the different methods, we consider two different alternative scenarios: one alternative configuration $a_1$ with only one non-zero parameter with a large effect size, and a second alternative configuration $a_2$ with five true non-zero parameters but with small effect sizes for all.   We are interested in the ability of the test to reject the global null hypothesis, but also in the ability of the test to reject the individual null hypotheses.   Under the alternative scenario $a_1,$ we calculate the power to reject the global hypothesis (denoted as ``$a_1$" in the tables) and  for the alternative configuration $a_2$, we calculate both the power to reject the global null hypothesis (denoted as ``$a_2$" in the tables) and the sum of the five powers to rejected the five individual true alternatives (denoted as ``ind $a_2$" in the tables).   Note also that the true effects are purposefully chosen to be small under $a_2,$ and therefore the typical empirical results are considerably smaller than $5,$ as expected.  This is done so that the observed global power is not uniformly high, which allows us to detect more subtle differences among the various methods.}

\subsection{Multivariate Normal Model}

{We consider the multivariate normal model with $n=200$ clusters, cluster size $m=4$ or $10$, and the number of covariates set to $p=10$ or $ 20.$  Four different $\Sigma$ scenarios are considered: 1) three exchangeable structures with $\sigma^2=0.8$ and $\rho=\mbox{cov}(y_{ij}, y_{ik})=0,$  $0.2$ or $0.5$; 2) one arbitrary structure, where $\Sigma=$ $((1.3,0.9,0.5,0.3)^T,(0.9,1.9,1.3,0.3)^T,$ $(0.5,1.3,1.3,0.1)^T,(0.3,0.9,0.1,0.7)^T).$ In each simulation, the $m\times p$ covariate matrix $X_i$ is obtained by randomly sampling from normal distributions. 

We consider here the many-to-one comparisons where the first parameter is taken as the baseline. Under the global null hypothesis $H_0,$ the true value of the regression parameters is set to $\beta^T=0,$ and the power is calculated under two different alternative configurations  $\beta_{a_1}^T=(0, 0, 0, 0.032, 0,\ldots, 0)$ and $\beta_{a_2}^T=(0, 0.008, 0.01, -0.03, 0.005, -0.01, 0,\ldots, 0)$. Under $\beta_{a_1}$, there is only one true alternative, and we evaluate the power to reject the global null hypothesis. Under $\beta_{a_2}$, there are five true alternatives and we evaluate both the power to reject the global null and the sum of five powers to reject the five true alternatives.  

\indent Table~\ref{tab:normal} (three exchangeable $\Sigma$ scenarios) and Table~\ref{tab:4} (general $\Sigma$) summarize the results of our simulations.  Overall, it is shown that the method which utilizes MNQ and  correctly accounts for the intra-cluster correlations, has the best performance.  A comparison of MNQ and naive MNQ clearly shows the cost of ignoring these correlations:  the FWER of MNQ is superior to that of naive MNQ for $\rho\neq0$ (when $\rho=0$ the two methods are almost identical).   Notably, the power of the naive MNQ is occasionally higher than that of MNQ, however, this is only due to the over-inflation of the naive MNQ's FWER.  Overall, MNQ exhibits the best performance among all of the multiple comparison procedures.  The small effect sizes chosen under $a_2$ allow us to detect more subtle differences in the performance of the methods.  Notice that for the rejection of the global null hypothesis, Holm's method has exactly the same power as that of the Bonferroni method. However, for the individual powers, Holm's method has higher power to reject individual hypothesis than the Bonferroni method.  

\indent We also evaluate the efficiency of the maximum composite likelihood estimator versus maximum likelihood estimator.   That is, we compute the ratio of the standard error of the MLE versus that of the MCLE.  For small $\rho$, the ratio is close to one and as $\rho$ increases, the ratio decreases. This demonstrates that the efficiency of composite likelihood estimator decreases with the increase of the intra-cluster correlation, as expected. }

\begin{table}[H]
\scriptsize
\caption{Simulations results for the multivariate normal model with exchangeable $\Sigma$ }
\label{tab:normal}
\begin{center}
\begin{tabular}{@{}cccccccccccc@{}} \hline
\toprule[1.5pt]   &  $\rho$ & $m$ &   $p$  &    MNQ & naive& Bonf & S-D &Holm & Scheff\'e & efficiency \\
\midrule[1.5pt]
FWER &\multirow{16}{*}{0} & \multirow{8}{*}{4} & \multirow{4}{*}{10} &  0.0545& 0.0553& 0.0419& 0.0427& 0.0419& 0.0007&  0.9983\\
 $a_1$& & & &\g 0.8164&\g 0.8166&\g 0.7894&\g 0.7918& \g 0.7894&\g 0.2801\\
$a_2$ & &  & &\g 0.8057&\g 0.8053&\g 0.7617&\g 0.7738& \g 0.7617&\g 0.2226 \\
 ind $a_2$ & & & &\g 0.9080&\g 0.9079&\g 0.8417&\g 0.8503&\g 0.8848& \g 0.2242 \\
\cmidrule{4-11}
FWER&& & \multirow{4}{*}{20}  & 0.0511& 0.0502& 0.0352& 0.0363& 0.0352& 0.0000& 0.9980\\
$a_1$& & & & \g 0.7487&\g 0.7476&\g 0.7062&\g 0.7086& \g 0.7062&\g  0.0259  \\
$a_2$ & &  & &\g 0.7150&\g 0.7134&\g 0.6687&\g 0.6628& \g 0.6687&\g 0.0162 \\
ind $a_2$ & & & &\g 0.7698&\g 0.7674&\g 0.7081&\g 0.7007& \g 0.7518&\g
0.0162 \\ \cmidrule{3-11}
FWER &&\multirow{8}{*}{10} & \multirow{4}{*}{10}  & 0.0479& 0.0471& 0.0375& 0.0378& 0.0375& 0.0001&  0.9989 \\
$a_1$& & & &\g 0.9983&\g 0.9983&\g 0.9979&\g 0.9980& \g 0.9979&\g 0.9284\\
$a_2$ & &  & &\g 0.9993&\g 0.9993&\g 0.9986&\g 0.9990& \g 0.9986&\g 0.8792\\
ind $a_2$ && &  &\g 1.4822&\g 1.4816&\g 1.4219&\g 1.4284& \g 1.4896 &\g 0.8933\\
\cmidrule{4-11}
FWER & & &   \multirow{4}{*}{20} &0.0487& 0.0485& 0.0363& 0.0373& 0.0363& 0.0000&  0.9986\\
$a_1$& & & &\g 0.9981&\g 0.9980&\g 0.9967&\g 0.9969& \g 0.9967&\g 0.5428\\
$a_2$ & &  & &\g 0.9978&\g 0.9977&\g
0.9963&\g 0.9957& \g 0.9963&\g 0.4137 \\
ind $a_2$ & & & &\g 1.3439&\g 1.3406&\g 1.2759&\g 1.2776& \g 1.3267&\g 0.4139\\  \midrule[1.5pt]
FWER &\multirow{16}{*}{0.2} &\multirow{8}{*}{4} & \multirow{4}{*}{10}  &  0.0494& 0.0670&0.0389& 0.0397& 0.0389& 0.0001& 0.9453 \\
$a_1$& & & & \g 0.7760& \g 0.8113&\g 0.7453&\g 0.7476& \g 0.7453&\g 0.2481\\
$a_2$& & & &\g 0.7630&\g 0.8044&\g 0.7224&\g 0.7280& \g 0.7224&\g 0.1831  \\
ind $a_2$& && &\g 0.8556&\g 0.9268&\g 0.7939&\g 0.8032& \g 0.8317&\g 0.1845 \\
\cmidrule{4-11}
FWER && &  \multirow{4}{*}{20} &  0.0533& 0.0734&0.0390& 0.0397& 0.0390& 0.0000&  0.9430\\
$a_1$& & & &\g 0.7044&\g 0.7490&\g 0.6591&\g 0.6617& \g 0.6591&\g 0.0191 \\
$a_2$ & &  & &\g 0.6713&\g 0.7200&\g 0.6187&\g 0.6148& \g 0.6187&\g 0.0102\\
ind $a_2$&&& &\g 0.7106&\g 0.7777&\g 0.6476&\g 0.6438& \g 0.6937&\g 0.0102\\ \cmidrule{3-11}
FWER&& \multirow{8}{*}{10} & \multirow{4}{*}{10}  &  0.0467& 0.1019&0.0357& 0.0365& 0.0357& 0.0003&  0.8685\\
$a_1$& & & &\g 0.9912&\g 0.9974&\g 0.9875&\g 0.9880& \g 0.9875&\g 0.8098 \\
$a_2$ & &  & &\g  0.9925&\g 0.9983&\g 0.9877&\g 0.9897& \g 0.9877&\g 0.7295 \\
ind $a_2$&& & &\g  1.3506&\g 1.5795&\g 1.2871&\g 1.2968& \g 1.3407&\g 0.7374 \\
\cmidrule{4-11}
FWER&&  &  \multirow{4}{*}{20} &  0.0468& 0.1057&0.0320& 0.0331& 0.0320& 0.0000 &0.8636\\
$a_1$& & & &\g 0.9868&\g 0.9970&\g 0.9813&\g 0.9819&\g 0.9813&\g 0.3114\\
$a_2$ & &  & &\g 0.9820&\g 0.9964&\g 0.9758&\g 0.9752& \g 0.9758&\g 0.2146\\
ind $a_2$& && &\g 1.2100&\g 1.4205&\g 1.1495&\g 1.1545& \g 1.1891&\g 0.2146\\
\midrule[1.5pt]
FWER&\multirow{16}{*}{0.5} & \multirow{8}{*}{4} & \multirow{4}{*}{10}  &  0.0513& 0.0977&0.0390& 0.0398& 0.0390& 0.0007 & 0.7491\\
$a_1$& & & & \g 0.7235&\g 0.8129&\g 0.6867&\g 0.6904& \g 0.6867&\g 0.1947 \\
$a_2$ & &  & &\g 0.6922&\g 0.8042&\g 0.6615&\g 0.6571& \g 0.6615&\g 0.1497 \\
ind $a_2$& &&  &\g 0.7570&\g 0.9391&\g 0.7208&\g 0.7074 &\g 0.7533&\g 0.1502 \\\cmidrule{4-11}
FWER & & &  \multirow{4}{*}{20}  & 0.0510& 0.1029&0.0377& 0.0385& 0.0377& 0.0000 &0.7343\\
$a_1$& & & & \g 0.6420& \g 0.7526&\g 0.5950& \g 0.5985& \g 0.5950&\g  0.0140\\
$a_2$ & &  & &\g 0.6031&\g 0.7322&\g 0.5437&\g 0.5508& \g 0.5437&\g 0.0076  \\
ind $a_2$& && &\g 0.6369&\g 0.8035&\g 0.5677&\g 0.5750  &\g 0.6109&\g 0.0076  \\
\cmidrule{3-11}
FWER& &\multirow{8}{*}{10} & \multirow{4}{*}{10}  & 0.0520& 0.2079&0.0410& 0.0417& 0.0410& 0.0000&  0.6070\\
$a_1$& & & & \g 0.9570&\g 0.9936&\g 0.9466&\g 0.9469&\g 0.9466&\g 0.6125 \\
$a_2$ & &  & &\g 0.9555&\g 0.9982&\g 0.9438&\g 0.9431& \g 0.9438&\g 0.5062 \\
ind $a_2$& &&  &\g 1.1914&\g 1.6903&\g 1.1367& \g 1.1372&\g 1.1877 &\g 0.5096   \\ \cmidrule{4-11}
FWER& & &  \multirow{4}{*}{20} & 0.0459& 0.2271&0.0328& 0.0337& 0.0328& 0.0000&0.5898 \\
$a_1$& & & & \g 0.9403& \g 0.9938&\g 0.9224& \g 0.9243& \g 0.9224& \g 0.1408  \\
$a_2$ & &  & &\g 0.9222&\g 0.9948&\g 0.8932&\g 0.8968& \g 0.8932&\g 0.0871 \\
ind $a_2$& &&  &\g 1.0589&\g 1.5362&\g 0.9907&\g 0.9983 &\g 1.0306 &\g 0.0871 \\
 \bottomrule[1.5pt]
\end{tabular}
\end{center}
\end{table}

\begin{table}[H]
\caption{Simulations results for the multivariate normal model with unstructured $\Sigma$} \label{tab:4}
\begin{center}
\begin{tabular}{@{}cccccccccc@{}} \hline
\toprule[1.5pt]   &   $m$ &   $p$  &    MNQ & naive& Bonf & S-D &Holm & Scheff\'e \\
\midrule[1.5pt]
 FWER& \multirow{8}{*}4 & \multirow{4}{*}{10}  & 0.0464& 0.0729&0.0345& 0.0358& 0.0345& 0.0004&\\
 $a_1$& & &  \g 0.6348&\g 0.7089&\g 0.5962&\g 0.5992&\g 0.5962&\g 0.1358  \\
 $a_2$ & &  & \g 0.5123&\g 0.6045&\g 0.4763&\g 0.4714& \g 0.4763&\g 0.0614\\
 ind $a_2$& & & \g 0.5499&\g 0.6707&\g 0.5094&\g 0.5112 &\g 0.5357&\g 0.0615\\\cmidrule{3-10}
FWER& & \multirow{4}{*}{20}  &0.0390& 0.0664&0.0285& 0.0290& 0.0285& 0.0000\\
 $a_1$& &    &\g 0.5205&\g 0.6081&\g 0.4694&\g 0.4736&\g 0.4694&\g 0.0046   \\
$a_2$ & &  & \g 0.3913&\g 0.4864&\g 0.3378&\g 0.3428& \g 0.3378&\g 0.0011\\
ind $a_2$& && \g 0.4057&\g 0.5090&\g 0.3436&\g 0.3508&\g 0.3743&\g 0.0011 \\
\cmidrule{2-10}
FWER & \multirow{8}{*}{10} & \multirow{4}{*}{10} & 0.0472& 0.0407&0.0360& 0.0367& 0.0360& 0.0004\\
 $a_1$& &  & \g 0.6310&\g 0.6102&\g 0.5906&\g 0.5940&\g 0.5906&\g  0.1198 \\
 $a_2$ & &  & \g 0.5025&\g 0.4779&\g 0.4560&\g 0.4599& \g 0.4560&\g 0.0537\\
 ind $a_2$& && \g 0.5442&\g 0.5142&\g 0.4870 &\g 0.4911 &\g 0.5158&\g 0.0537\\ \cmidrule{3-10}
FWER  &  &  \multirow{4}{*}{20} &0.0361& 0.0302&0.0262& 0.0267& 0.0262& 0.0000\\
 $a_1$& &    &\g 0.5078&\g 0.4865&\g 0.4585&\g 0.4615& \g 0.4585&\g 0.0025 \\
$a_2$ & &   & \g 0.3668&\g 0.3448&\g 0.3148&\g 0.3167& \g 0.3148&\g 0.0010 \\
ind $a_2$& & & \g 0.3711&\g 0.3490&\g 0.3186& \g 0.3204&\g 0.3462&\g 0.0010  \\
 \bottomrule[1.5pt]
\end{tabular}
\end{center}
\end{table}

\subsection{Multivariate Probit Model}

{Here, we consider $n=500$ clusters with a cluster size $m=4,$ or $10.$ The binary variables are generated by dichotomizing latent multivariate normal variables with a threshold of zero. For each cluster, an $m\times p$ covariate matrix $X_i$, with $p=10$ or $20,$ is obtained by randomly sampling from normal distributions.  The regression coefficients under the global null hypothesis is $\beta^T= 0$ and the two alternative configurations are $\beta_{a1}^T=(0, 0, 0, 0.03, 0,\ldots, 0)$ and $\beta_{a2}^T=(0, 0.008, 0.01, -0.03, 0.005, -0.01, 0,\ldots, 0)$. The latent multivariate random vector has a mean $X_i\beta$ and a correlation matrix with $\rho$ on the off-diagonals and $\sigma=1$. Here, we consider $\rho=0,$ or $0.5.$ 

\indent The empirical results are given in Table~\ref{tab:5}.   The results show that the MNQ method has overall the best performance.  We note though that for the two settings when $\rho=0.5$ and $p=20,$ the MNQ method has FWER more than 2 standard deviations away from 0.05.  
Similarly to the multivariate normal setting, the naive MNQ for the multivariate probit model has large FWER when $\rho=0.5.$  For the global hypothesis, the Sid\'ak method has higher power than that of the Bonferroni and Holm method, whereas the Holm method has higher power to reject individual null hypotheses than the Bonferroni and Sid\'ak method.}  

\subsection{Quadratic Exponential Model}

{Here, we take a total of $n=700$ clusters, and $p=10$ or $20$ predictors. The number of observations within each clusters, $m_i,$ varies between clusters and is uniformly sampled from $\{4,5,6,7,8\}$. The $m_i\times p$ covariate matrix $X_i$ is  sampled from a standard normal distribution. We also consider two different values for the interaction parameter: $w=0$ or $0.5$. The null value of the regression coefficients is $\beta^T\equiv0$ and the two alternative configurations are to $\beta_{a1}^T=(0, 0, 0, 0.12, 0,\ldots, 0)$ and $\beta_{a2}^T=(0, 0.08, 0.12, -0.03 ,0.05 , -0.08, 0,\ldots, 0)$. The empirical FWER and power are computed and summarized in Table~\ref{tab:6}.  Overall, MNQ has clearly the best performance.}

\begin{table}[H]
\caption{Simulation results for the probit model} \label{tab:5}
\begin{center}
\begin{tabular}{@{}cccccccccc@{}} \hline
\toprule[1.5pt]   &  $\rho$ & $m$ &   $p$  &    MNQ & naive& Bonf & S-D &Holm & Scheff\'e \\
\midrule[1.5pt]
FWER&\multirow{16}{*}{0}& \multirow{8}{*}{4} & \multirow{4}{*}{10}  &0.0530& 0.0506&0.0413& 0.0424& 0.0413& 0.0001\\
 $a_1$& & & &\g 0.8700&\g 0.8705&\g 0.8477&\g 0.8496& \g 0.8477&\g 0.3420 \\
$a_2$ & &&  &\g 0.9114&\g 0.9109&\g 0.8885&\g 0.8907& \g 0.8885&\g 0.3572\\
ind $a_2$& && &\g 1.0828&\g 1.0779&\g 1.0193&\g 1.0305 &\g 1.0682&\g 0.3590\\\cmidrule{4-10}
FWER& & & \multirow{4}{*}{20}& 0.0528& 0.0503&0.0389& 0.0395& 0.0389& 0.0000\\
 $a_1$& & &&\g 0.8258&\g 0.8232&\g 0.7902&\g 0.7924& \g 0.7902&\g 0.0460\\
$a_2$ & &&  &\g  0.8547&\g 0.8511&\g 0.8149&\g 0.8159& \g 0.8149&\g 0.0410  \\
ind $a_2$& && &\g  0.9436&\g 0.9389&\g 0.8847&\g 0.8825 &\g 0.9308&\g 0.0410  \\ \cmidrule{3-10}
FWER&& \multirow{8}{*}{10} & \multirow{4}{*}{10}  &0.0526& 0.0515&0.0423& 0.0428& 0.0423& 0.0005\\
 $a_1$& & &&\g 0.9996&\g 0.9996&\g 0.9995&\g 0.9995& \g 0.9995&\g 0.9641 \\
$a_2$ & &&  &\g 1.0000& \g 1.0000&\g 1.0000&\g 1.0000& \g 1.0000&\g 0.9695  \\
ind $a_2$& && &\g 1.6649& \g 1.6594&\g 1.5839& \g 1.5939&\g 1.6658&\g 1.0024   \\ \cmidrule{4-10}
FWER& & & \multirow{4}{*}{20}&0.0527& 0.0508 &0.0364& 0.0375& 0.0364& 0.0000\\
 $a_1$& & &&\g 0.9993&\g 0.9995&\g 0.9985&\g 0.9985& \g 0.9985&\g 0.6596 \\
$a_2$ & &&  &\g 1.0000&\g 0.9999&\g 0.9997&\g  0.9997& \g 0.9997&\g 0.6603 \\
ind $a_2$& && &\g 1.4867&\g 1.4780&\g  1.4057&\g  1.4062 &\g 1.4624&\g 0.6607 \\
\midrule[1.5pt]
FWER&\multirow{16}{*}{0.5} & \multirow{8}{*}{4}& \multirow{4}{*}{10} &0.0508& 0.0793&0.0393& 0.0404& 0.0393& 0.0003\\
 $a_1$& & & &\g 0.8102&\g 0.8601&\g 0.7808&\g 0.7841& \g 0.7808&\g 0.2726 \\
$a_2$ & &&  &\g 0.8530&\g 0.9038&\g 0.8305&\g 0.8258& \g 0.8305&\g 0.2689   \\
ind $a_2$& &&  &\g 0.9852&\g 1.1028&\g 0.9321&\g 0.9334 &\g 0.9768&\g 0.2708   \\ \cmidrule{4-10}
FWER& & & \multirow{4}{*}{20}  &0.0585& 0.0915&0.0406& 0.0415& 0.0406& 0.0000\\
 $a_1$& & & &\g 0.7578&\g 0.8196&\g 0.7082&\g 0.7106& \g 0.7082&\g 0.0264 \\
 $a_2$ & & &  &\g 0.7891&\g 0.8534&\g  0.7365&\g 0.7428& \g  0.7365&\g 0.0247   \\
ind $a_2$& &&  &\g 0.8637&\g 0.9712&\g  0.7855 &\g 0.7963&\g 0.8330&\g 0.0247   \\  \cmidrule{3-10}
FWER& & \multirow{8}{*}{10}& \multirow{4}{*}{10}&  0.0513& 0.1437&0.0402& 0.0412& 0.0402& 0.0005\\
 $a_1$& & & &\g 0.9900&\g 0.9979&\g 0.9871&\g 0.9876& \g 0.9871&\g 0.8017 \\
$a_2$ & &&  &\g 0.9952&\g 0.9997&\g 0.9966&\g 0.9939& \g 0.9966&\g 0.8038  \\
ind $a_2$& &&  &\g 1.4075&\g 1.7926&\g 1.3520&\g 1.3552 &\g 1.4154&\g 0.8147  \\ \cmidrule{4-10}
FWER& & & \multirow{4}{*}{20}& 0.0543& 0.1622&0.0382& 0.0389& 0.0382& 0.0000\\
 $a_1$& & & &\g 0.9862&\g 0.9974&\g 0.9784&\g 0.9787& \g 0.9784&\g 0.3081 \\
$a_2$ & &&  &\g 0.9935&\g 0.9998&\g 0.9894&\g 0.9883& \g 0.9894&\g 0.3006  \\
ind $a_2$& &&  &\g 1.2873&\g 1.6251&\g 1.2248&\g 1.2218 &\g 1.2777&\g 0.3006  \\
 \bottomrule[1.5pt]
\end{tabular}
\end{center}
\end{table}

\begin{table}[H]
\caption{Simulation results for the quadratic exponential model} \label{tab:6}
\begin{center}
\begin{tabular}{@{}ccccccccccc@{}}
\toprule[1.5pt]   &  $w$ & $m$ &   $p$  &    MNQ & naive& Bonf & S-D &Holm & Scheff\'e \\
\midrule[1.5pt]
FWER&\multirow{16}{*}{0}& \multirow{8}{*}{4} & \multirow{4}{*}{10}  &0.0514& 0.0562&0.0400& 0.0403& 0.0400& 0.0001\\
 $a_1$& & &&\g 0.5390&\g 0.5534&\g 0.5010&\g 0.5046& \g 0.5010&\g 0.0777\\
 $a_2$ & & & &\g 0.7067&\g 0.7240&\g 0.6573&\g 0.6636& \g 0.6573&\g 0.0935 \\
 ind $a_2$& &&   &\g 0.9779&\g 1.0283&\g  0.8826 &\g 0.8888 &\g 0.9373&\g 0.0986   \\ \cmidrule{4-10}
FWER& & & \multirow{4}{*}{20}& 0.0561& 0.0767&0.0404& 0.0412& 0.0404& 0.0000\\
 $a_1$& & & &\g 0.4551&\g 0.4853&\g 0.3990&\g 0.4021& \g 0.3990&\g 0.0025 \\
 $a_2$ & & & &\g 0.6040&\g 0.6365&\g 0.5237&\g 0.5403& \g 0.5237&\g 0.0027 \\
ind $a_2$& &&   &\g 0.7731&\g 0.8347&\g 0.6514 &\g 0.6679 &\g 0.7029&\g 0.0027   \\  \cmidrule{3-10}
FWER&& \multirow{8}{*}{10} & \multirow{4}{*}{10}  &0.0491& 0.0549&0.0381& 0.0384& 0.0381& 0.0001\\
 $a_1$& & &&\g 0.5391&\g 0.5535&\g 0.5010&\g 0.5046& \g 0.5010&\g 0.0779 \\
 $a_2$ & & & &\g 0.7066&\g 0.7239&\g 0.6573&\g 0.6636& \g 0.6573&\g 0.0934 \\
ind $a_2$& &&   &\g 0.9780&\g 1.0284&\g  0.8826&\g  0.8890&\g 0.9373&\g 0.0985    \\ \cmidrule{4-10}
FWER& & & \multirow{4}{*}{20}& 0.0561& 0.0767&0.0404& 0.0412& 0.0404& 0.0000\\
 $a_1$& & & &\g 0.4548&\g 0.4849&\g 0.3989&\g 0.4020& \g 0.3989&\g 0.0026\\
$a_2$ & & & &\g  0.5971&\g 0.6309&\g 0.5255&\g 0.5361& \g 0.5255&\g 0.0013 \\
ind $a_2$& &&   &\g  0.7681&\g 0.8316&\g 0.6527&\g 0.6688 &\g 0.7043 &\g 0.0013 \\
\midrule[1.5pt]
FWER&\multirow{16}{*}{0.5} & \multirow{8}{*}{4}& \multirow{4}{*}{10}  &0.0521& 0.0000&0.0417& 0.0424& 0.0417& 0.0002\\
 $a_1$& & &&\g 0.7864&\g 0.0307&\g 0.7546&\g 0.7582 & \g 0.7546&\g 0.2329\\
 $a_2$ & & & &\g 0.9050&\g 0.0444&\g 0.8800&\g 0.8772& \g 0.8800&\g 0.2531    \\
ind $a_2$ & & & &\g 1.5136&\g 0.0452&\g 1.4102&\g 1.4089 &\g 1.4915 &\g 0.2753    \\ \cmidrule{4-10}
FWER& & & \multirow{4}{*}{20}  & 0.0509& 0.0000&0.0377& 0.0383& 0.0377& 0.0000\\
 $a_1$& & & &\g 0.7214&\g 0.0158&\g 0.6739&\g 0.6769& \g 0.6739&\g 0.0178\\
 $a_2$ & & & &\g 0.8460&\g 0.0148&\g 0.7998&\g 0.7976& \g 0.7998&\g 0.0132  \\
ind $a_2$ & & & &\g 1.2902&\g 0.0150&\g 1.1532&\g 1.1612 &\g 1.2141&\g 0.0134    \\ \cmidrule{3-10}
FWER& & \multirow{8}{*}{10}& \multirow{4}{*}{10} &0.0521& 0.0000&0.0417& 0.0424& 0.0417& 0.0002 \\
 $a_1$& & & &\g 0.7864&\g 0.0307&\g 0.7546&\g 0.7582& \g 0.7546&\g 0.2329\\
 $a_2$ & & & &\g 0.9141&\g 0.0407&\g 0.8800&\g 0.8855& \g 0.8800&\g 0.2518   \\
ind $a_2$ & & & &\g 1.5326&\g 0.0416&\g 1.4102&\g 1.4261 &\g 1.4915 &\g 0.2746   \\  \cmidrule{4-10}
FWER&& & \multirow{4}{*}{20}& 0.0509& 0.0000&0.0378& 0.0384& 0.0378& 0.0000\\
 a& & & &\g 0.7202&\g 0.0161&\g 0.6731&\g 0.6760& \g 0.6731&\g 0.0178 \\
 $a_2$ & & & &\g 0.8460&\g 0.0148&\g 0.7998&\g 0.7976& \g 0.7998&\g 0.0132  \\
ind  $a_2$ & & &&\g 1.2902&\g 0.0150&\g 1.1532& \g 1.1612 &\g 1.2141&\g 0.0134   \\
 \bottomrule[1.5pt]
\end{tabular}
\end{center}
\end{table}

\section{Analysis of Depression Data}\label{sec:data}

\begin{table}[H]
\renewcommand{\arraystretch}{1.35}
\caption{Composite likelihood estimates of the health factors' regression coefficients}
\label{tab:7}
\begin{center}
\begin{tabular}{rccc} \hline
           &   estimate & SE &  $p$-value    \\ \hline
sleeplessness     &  1.3330  &0.0290 &  $< 2e-16$ \\
smoking    &    0.2826 & 0.0439 &  $< 2e-16$ \\
high blood pressure     &    0.0764 &  0.0219 & $2.07e-11$ \\
diabetes    & 0.0710 & 0.0296 &  $8.96e-07$ \\
difficulty in walking      &   0.0695 & 0.0054 &   $< 2e-16$ \\
age       &   0.0007 &  0.00003 &  $< 2e-16$ \\
activity   & -0.0156 & 0.0064 & $2.35e-05$ \\
$w$         &   0.2877 & 0.0094 &  $< 2e-16$ \\ \hline
\end{tabular}
\end{center}
\end{table}

{We apply our proposed method to  the health and retirement study (HRS) dataset.  Information about health, financial situation, family structure, and health factors were collected by the RAND center at the University of Michigan. We perform multiple comparisons on the effects of seven health factors on depression status of seniors. 
Depression status  is recorded as a binary response variable, whereas the seven health factors include age (in months), smoking, restless sleep, diabetes, high blood pressure, frequent vigorous physical activity, and difficulty in walking. For each individual we include only the years for which all of the factors were recorded. In total,  there are $33\,636$ people included in the analysis and the number of repeated measurements vary across individuals. As the response variable is binary and the cluster sizes vary, the quadratic exponential model is a natural choice to model this data set.  

\indent The full likelihood approach is very computationally challenging for this model, and hence we use the proposed composite likelihood based method to perform inference.   The $w$ parameter in the quadratic exponential model allows us to account for the interaction effect among the repeated measurements for the same individuals. The MCLE estimates and the associated standard errors are reported in Table~\ref{tab:7}. 

To compare the effect sizes of all the seven health factors, we perform all pairwise comparisons on the seven parameters with $H_{0, i,j} = \{\beta_i = \beta_j\}$ for a total of $21$ null hypotheses.   Of the six methods described in Table~\ref{table:simstab}, we choose the MNQ approach based on its superior performance in Section~\ref{sec:simulation}.   To show how different the results will be if the within-patient correlations are ignored, we also compare the result of the MNQ with the naive MNQ method.  Both the MNQ and the naive MNQ reject the global null hypothesis that all pairs of health factors have equal effect on the depression status.   The results for the individual hypotheses are given in Table~\ref{tab:8}.  }

{The MNQ method rejects $15$ hypotheses, whereas the naive MNQ method rejects $18$ out of the total $21$ hypotheses. Based on the estimates of the effect sizes (Table~\ref{tab:7}), we note that restless sleep and smoking are the two health factors with the largest effect sizes. Both MNQ and naive MNQ reject the pairwise comparisons between restless sleep with all other health factors and smoking with all other factors. This shows that restless sleep and smoking are the two leading health factors for the occurrence of depression. High blood pressure, diabetes, and difficulty in walking have the third, fourth and fifth largest estimated effect sizes. When we examine the three pairwise comparisons among these three factors, both MNQ and naive MNQ accept the three null hypotheses, indicating that these three health factors have similar effects and importance to the disease. Furthermore, when we compare high blood pressure with age and activity, both methods reject the two comparisons, indicating that high blood pressure is more important than age and activity with regard to the disease development.  

MNQ and naive MNQ are in agreement in all the aforementioned comparisons. However, when we compare the effect sizes between age and diabetes, diabetes and activity, age and activity,  the MNQ method accepts these three null hypotheses while the naive method rejects all three. The difference between the two methods is due to the correlation among the repeated measurements, which is estimated as $\widehat w=0.285.$  By ignoring this correlation, as in the naive method, the standard errors are underestimated leading to more rejections.}

\begin{table}\label{table:example}
\renewcommand{\arraystretch}{1.35}
\caption{Results of MNQ and naive MNQ in testing individual null hypotheses in the depression study data set. A: fail to reject, R: reject $H_0$ }
\label{tab:8}
\begin{center}
\begin{tabular}{cccccccc} \hline
 $H_0$   &  MNQ        &   naive     & $H_0$&  MNQ        &   naive \\ \hline
$\beta_{sleep}=\beta_{smoke}$ &  R& R&$\beta_{hbp}=\beta_{diabet}$ &A   &  A\\  
$\beta_{sleep}=\beta_{hbp}$ &  R&R & $\beta_{hbp}=\beta_{dif\hspace{-0.05cm}f\ walk}$ & A  & A\\
$\beta_{sleep}=\beta_{diabet}$ &R  & R&  $\beta_{hbp}=\beta_{age}$ &R & R\\
$\beta_{sleep}=\beta_{dif\hspace{-0.05cm}f\ walk}$ & R  &R&    $\beta_{hbp}=\beta_{activity}$ & R &  R  \\
$\beta_{sleep}=\beta_{age}$ & R  &   R  & $\beta_{diabet}=\beta_{dif\hspace{-0.05cm}f\ walk}$ &A   & A\\
$\beta_{sleep}=\beta_{activity}$ & R  & R& $\beta_{diabet}=\beta_{age}$ & A &   R&\\
  $\beta_{smoke}=\beta_{hbp}$ &R &R&       $\beta_{diabet}=\beta_{activity}$ & A  &R\\
 $\beta_{smoke}=\beta_{diabet}$ &R  & R &     $\beta_{dif\hspace{-0.05cm}f\ walk}=\beta_{age}$ &R &R\\
  $\beta_{smoke}=\beta_{dif\hspace{-0.05cm}f\ walk}$ & R  &R&     $\beta_{dif\hspace{-0.05cm}f\ walk}=\beta_{activity}$ &R& R \\ 
   $\beta_{smoke}=\beta_{age} $ &R  & R& $\beta_{age}=\beta_{activity}$ &A &   R\\
 $\beta_{smoke}=\beta_{activity}$&R &R&\\ \hline
\end{tabular}
\end{center}
\end{table}

\section{Discussion}\label{sec:discussion}
In many correlated multivariate models, it is often difficult to perform multiple comparisons based on the full likelihood.  In this paper, we propose to use the composite likelihood method to construct multiple comparison procedures to overcome this computational difficulty.  Theory is developed based on the asymptotic properties of the composite likelihood test statistic and illustrated for three different models: multivariate normal, multivariate probit and quadratic exponential. The simultaneous quantile of multivariate normal is used as a threshold for test statistics compared to some well-known traditional thresholds. This MNQ method, which is based on composite likelihood test statistics and uses multivariate normal quantiles to derive cut-off values for the test statistics,  possesses a more acceptable family-wise type I error rate in most simulation settings, compared to the other test procedures.

\section{Acknowledgment}

All authors are grateful to the anonymous referees, Editor, and Associate Editor for their careful reading of the manuscript and valuable comments, which greatly improved the manuscript.   This work is supported by NSERC grants held by Gao and Jankowski.

\section*{Appendix}

\subsection{Simulation result on all pairwise comparisons}
In Section 4 of the main manuscript, we provide simulation results under various settings for many-to-one comparisons. Here, we provide additional simulations for all pairwise comparisons.  We simulate multivariate normal, multiviate probit and quadratic exponential models as described in Section 4 of the paper.  The global null hypothesis, sample size and the two alternative configurations are the same as those used in many-to-one comparisons.  We perform all pairwise comparisons with $m=4$ and $p=10.$ For the multivariate normal, multivariate probit, and quadratic exponential models, we consider $\rho=0,$ or $0.5.$  For the quadratic exponential, we consider $w=0, 0.5$.  The results are summarized in Table 1.  We again observe that the MNQ approach has the best performance. MNQ maintains good control of the FWER except for the case of quadratic exponential model with $\rho=0.5,$ where it is slightly above $0.05$. The naive MNQ either has either very large FWER or very small FWER, indicating its poor control of the error rate. Among all the methods which maintain good control of FWER, the MNQ method achieves the highest power.  In addition, we consider the Tukey approach, as it is a commonly  used testing procedure in all pairwise comparisons.

\begin{table}[t!]
\caption{Simulations results for the multivariate normal, probit, and quadratic exponential models}\label{tab:all}
\begin{center}
\begin{tabular}{@{}ccccccccc@{}} \hline
\toprule[1.5pt] model &  &  $\rho$  &    MNQ & naive& Bonf & S-D  & Scheff\'e& Tukey \\ \midrule[1.5pt]
\multirow{6}{*}{normal} &FWER& \multirow{3}{*}{0} &0.0537& 0.0562  & 0.0411& 0.0420  & 0.0038 &0.0536 \\
& a1& &\g 0.9274&\g 0.9266&\g 0.9096&\g 0.9113& \g 0.6115&\g 0.9256\\
& a2&  &\g 0.9800&\g 0.9807&\g 0.9735&\g 0.9740& \g  0.8173&\g 0.9792\\  \cmidrule{2-9}
&FWER & \multirow{3}{*}{0.5}   &   0.0484& 0.1101 &0.0358& 0.0365 &    0.0032 &  0.0489\\
&a1&  &\g 0.8611&\g 0.9245&\g 0.8325&\g 0.8346& \g  0.4769&\g 0.8587\\
& a2&  &\g 0.9492&\g 0.9775&\g 0.9346&\g  0.9361& \g  0.6854&\g 0.9482 \\ \hline 
\multirow{6}{*}{probit}&FWER&\multirow{3}{*}0  & 0.0534& 0.0494&0.0409& 0.0412&  0.0026& 0.0524\\
& a1&  &\g 0.9792& \g 0.9790&\g 0.9745&\g 0.9747& \g  0.7972&\g 0.9791\\
& a2 &  &\g 0.9961&\g 0.9961&\g 0.9946&\g 0.9946& \g  0.9321&\g 0.9959  \\\cmidrule{2-9}
&FWER&\multirow{3}{*}{0.5}  & 0.0523& 0.0864&0.0394& 0.0394& 0.0023& 0.0514\\
& a1&  &\g 0.9586&\g 0.9754&\g 0.9467&\g 0.9484& \g  0.6991&\g 0.9577  \\
&a2&  &\g  0.9885&\g 0.9938&\g 0.9842&\g 0.9848& \g  0.8707&\g 0.9884 \\ \hline 
\multirow{6}{*}{quad. exp.}&FWER&\multirow{3}{*}0  & 0.0534& 0.0631&0.0399& 0.0407&  0.0018& 0.0530\\
& a1& &\g  0.7710&\g 0.7869&\g 0.7270&\g 0.7301& \g  0.3224&\g 0.7678 \\
& a2&  &\g 0.9706&\g 0.9741&\g 0.9613&\g 0.9621& \g  0.7348&\g 0.9701  \\ \cmidrule{2-9}
&FWER&\multirow{3}{*}{0.5}  & 0.0548& 0.0000&0.0388& 0.0393&  0.0014& 0.0535\\
& a1&  &\g  0.9360&\g 0.0197&\g 0.9199&\g 0.9213& \g  0.6417&\g 0.9356\ \\
& a2&  &\g 0.9976&\g 0.2855&\g 0.9957&\g 0.9958& \g  0.9408&\g 0.9974  \\
 \bottomrule[1.5pt]
\end{tabular}
\end{center}
\end{table}

\subsection{A skewed distribution example}

Here, we consider a multivariate gamma distribution which has marginal univariate gamma distribution and a covariance structure.  To generate a multivariate gamma model, let $g_1$ be $m\times 1$ independent vectors from a gamma distribution with shape parameters $\gamma_1,$ a positive vector of dimension $m$. Define $G=K g_1$,  where $K$ is  a full rank matrix with all entries equal to either zero or one that follows some properties \citep{Ronn}.  ($K$ is called the incidence matrix).   Then $G$ has a multivariate gamma distribution with shape parameter $\alpha=K \gamma_1$ and covariance matrix $\Sigma = K \, \Gamma_1 K^T,$ where the (diagonal) matrix $\Gamma_{1}$ is the variance matrix of $g_1$. 

Given $n$ independent multivariate gamma vectors $Y=(y_1,y_2,\dots,y_n)^T,$ with $y_i=(y_{i1},\dots,y_{im})^T.$  The univariate composite log-likelihood function for the multivariate gamma model
can be formulated as
\begin{eqnarray*}
cl(\beta;Y) &=&  \sum_{i=1}^n \sum_{j=1}^m \left(
-\frac{\nu y_{ij}}{\mu_{ij}} -\nu \log \mu_{ij} + \nu log \nu + (\nu
-1) \log y_{ij} - \log \Gamma(\nu)\right),
\end{eqnarray*}
where $\mu_{ij}=E(y_{ij}),$ $\nu$ is the shape parameter, and $\mu_{ij}/\nu$ is the scale parameter. 
 We used the log link to define the mean parameter:
 $\mu_{ij} = \exp\{\vec{x}_{ij}\beta \}.$  Denote $\mu_i=(\mu_{i1},\dots,\mu_{im})^T$.
Under this set up, we have
\begin{eqnarray*}
cl^{(1)}(\beta;Y)  &=& \sum_{i=1}^{n}
\left(\frac{\partial{\mu_i}}{\partial \beta}\right)^T
V(\mu)_i^{-1}(y_i-\mu_i),
\end{eqnarray*}
where $V_i=
\mbox{diag}(\mu^2_{i1},\cdots,\mu^2_{im})/\nu$, and
\begin{eqnarray*}
H(\beta) &=& n^{-1} \sum_{i=1}^n  \left(\frac{\partial{\mu_i}}{\partial \beta}\right)^T  V_i^{-1}\left(\frac{\partial{\mu_i}}{\partial \beta}\right), \\
J(\beta) &=& n^{-1} \sum_{i=1}^n
\left(\frac{\partial{\mu_i}}{\partial \beta}\right)^T  V_i^{-1}
\, \mbox{cov}(y_i) \, V_i^{-1}
\left(\frac{\partial{\mu_i}}{\partial \beta}\right).
\end{eqnarray*}
The dispersion parameter is
$\frac{1}{\nu}=\frac{D(6(n-p)+nD)}{6(n-p)+2nD}$, where
 $D=\frac{2}{nm-p}\sum_{i,j}\left(\frac{y_{ij}-\mu_{ij}}{\mu_{ij}} + \log \frac{\mu_{ij}}{y_{ij}}\right).$
Let $\widehat V_{in}$ denote the estimator of $V_i$ obtained by substituting $\widehat \mu_{ijn}$ for $\mu_{ij}.$  We estimate $H(\beta)$ and $J(\beta)$ as
\begin{eqnarray*}
\widehat{H}_n &=& n^{-1}\sum_{i=1}^n   X_i^T  V_{in}^{-1} X_i,\\
\widehat{J}_n &=& n^{-1}\sum_{i=1}^{n} X_i^T V_{in}^{-1} \ \ \widehat{\mbox{cov}}_n(y_i) \ \
V_{in}^{-1}X_i,
\end{eqnarray*}
with empirical variance $\widehat{\mbox{cov}}_n(y_i)=
(y_i -\widehat{\mu}_{in})(y_i -
\widehat{\mu}_{in})^T$, where
where $\widehat{\mu}_i$ is the vector
$\widehat{\mu}_i=\exp\{X_i\widehat\beta_n^c\}$.

In the simulation $\nu=1$, and under the global null hypothesis $H_0,$ the true value of the regression parameters is set to $\beta=0.75,$ and the power is calculated under two different alternative configurations  $\beta_{a_1}^T=(0.75, 0.75, 0.68, 0.75, \ldots, 0.75)$ and $\beta_{a_2}^T=(0.75, 0.80, 0.68, 0.70, 0.79, 0.69, 0.75,\ldots, 0.75)$. We simulate $10\,000$ data sets with $m=3,$ and $p=10.$ We perform many-to-one comparisons with the MNQ, naive MNQ, Bonferroni, Dunn-Sid\'ak, Holm and Scheff\'e method. We consider both independent and correlated cases. We simulate with the sample size $n=3\,000$ as we found that it takes at least $n=3\,000$ for the MNQ method to have the FWER fall within 2 standard deviations away from $0.05$. This larger sample size is expected for a skewed distribution such as the multivariate gamma. Among all the methods, the MNQ method continues to achieve the highest power and exhibits the best performance. The results are presented in Table 2.

\begin{table}[h!]
\caption{FWER and power for multivariate gamma distribution} \label{tab:8}
\begin{center}
\begin{tabular}{@{}ccccccccccc@{}} \hline
\toprule[1.5pt]     &    &   MNQ & naive& Bonf & S-D  & Sch\'effe\\
\midrule[1.5pt]
FWER &independent &0.0554& 0.0507&0.0437& 0.0444&  0.0003\\
$a_1$&  &\g 0.8763&\g 0.8777&\g 0.8508&\g  0.8531&\g  0.3055 \\
$a_2$ &   &\g 0.9906&\g 0.9899&\g 0.9856& \g 0.9862&\g 0.4526\\ \hline
FWER &correlated &0.0588& 0.3427&0.0468& 0.0479&  0.0003\\
$a_1$&  &\g 0.8223&\g 0.9883&\g 0.7853& \g 0.7877& \g 0.2378 \\
$a_2$&  &\g 0.9778&\g 0.9999&\g 0.9638&\g 0.9653&\g  0.3683\\
 \bottomrule[1.5pt]
\end{tabular}
\end{center}
\end{table}

\subsection{Some technical details}

 Xu and Reid (2011) provided a detailed proof of consistency under misspecification, along with a precise list of required conditions.  One can obtain from their work sufficient conditions for consistency even in the well-specified setting.  Here, for reference, we give a proof of some asymptotic properties of the composite likelihood estimator provided that the model is correctly specified and data is formed by $n$ independent clusters, each with fixed sample size $m.$ 
%


\medskip
\noindent \textbf{Regularity conditions:}
\begin{enumerate}
\item[(A1).] The marginal density function of $y_{ij},$ $f(y;\theta)$ is distinct for different values of $y$, i.e. if $\theta_1\neq\theta_2$ then $P(f(y_{ij};\theta)\neq f(y_{ij};\theta))>0,$ for all $j=1, \ldots, m.$
\item[(A2).] The marginal densities of $y_{ij}$ have common support for all $\theta.$
\item[(A3).] The true value $\theta_0$ is an interior point of $\Omega,$ the space of possible values of the parameter $\theta.$
\item[(A4).] Let $\alpha$ and $\partial^\alpha$ denote the index and partial derivative operator, respectively, as in the standard multi-index notation from multivariable calculus.    The marginal density $\log f$ is three times continuously differentiable in a closed ball around $\theta_0.$  Moreover,  there exists a constant $c$ and an integrable function $M(y)$ such that 
\begin{eqnarray*}
\left|(\partial^{\alpha} \partial^{\theta_i}  \log f)(y;\theta) \right| \leq M(y),
\end{eqnarray*}
for all $||\theta-\theta_0||_2<c$, all $|\alpha|=2,$ and any $i=1, \ldots, p.$   Here, $||\cdot||_2$ denotes the Euclidean norm. 
\item[(A5).]  $J(\theta_0)$ is well-defined (i.e. exists and is finite) and invertible. 
\item[(A6).] $H(\theta_0)$ is well-defined (i.e. exists and is finite) and (strictly) positive-definite.   
\end{enumerate}

\bigskip
\bigskip

\noindent Define the marginal composite log-likelihood function as 
\begin{eqnarray*}
cl(\theta)&=& \log CL(\theta;Y) \ \ =  \ \ \sum_{i=1}^n  \sum_{j=1}^{m} \log f(y_{ij}; \theta),
\end{eqnarray*}
and let $cl_m(\theta; y_i)=\sum_{j=1}^m  \log f(y_{ij};\theta))$.   

\begin{thm}
Under the regularity conditions (A1)-(A6), there exists a solution to the composite likelihood equation, $\cmle,$ which satisfies
\begin{eqnarray*}
\sqrt{n}(\cmle- \theta_0) &\Rightarrow& G^{-1/2}(\theta_0) \, Z
\end{eqnarray*}
where $G(\theta) = H(\theta) J^{-1}(\theta) H(\theta),$ and $Z$ is a standard normal random vector. 
\end{thm}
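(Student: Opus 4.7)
The plan is to follow the classical M-estimator / Z-estimator argument for asymptotic normality, adapted to the composite likelihood setting. Because each cluster is i.i.d.\ of fixed size $m$, the composite score $cl^{(1)}(\theta) = \sum_{i=1}^n \nabla_\theta cl_m(\theta; y_i)$ is a sum of i.i.d.\ vectors, so the core tools are a multivariate CLT for the score at $\theta_0$ and a uniform LLN for the Hessian, glued together by a Taylor expansion and a preliminary consistency statement for $\cmle$.

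First I would establish existence of a consistent local solution $\cmle$ to $cl^{(1)}(\theta)=0$. Condition (A4) provides a dominating function $M(y)$ on a closed ball $B_c(\theta_0)$, so by the standard envelope argument $n^{-1} cl(\theta)$ and $n^{-1} cl^{(2)}(\theta)$ both converge uniformly on $B_c(\theta_0)$ to their expectations. Identifiability (A1)--(A2) together with (A3) and Jensen's inequality for each marginal $\log f(y_{ij};\theta)$ shows that the population composite criterion has a strict local maximum at $\theta_0$; combined with the local uniform convergence and a Brouwer/inverse-function style argument based on the nondegeneracy of $H(\theta_0)$ in (A6), one obtains a sequence of roots $\cmle$ with $\cmle \stackrel{p}{\to} \theta_0$. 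Next I would Taylor-expand the score about $\theta_0$:
\begin{equation*}
0 \;=\; n^{-1/2}\, cl^{(1)}(\cmle) \;=\; n^{-1/2}\, cl^{(1)}(\theta_0) \;+\; \bigl[\, n^{-1}\, cl^{(2)}(\tilde\theta_n)\,\bigr]\, \sqrt{n}\bigl(\cmle-\theta_0\bigr),
\end{equation*}
with $\tilde\theta_n$ on the segment joining $\cmle$ and $\theta_0$. The cluster scores $S_i = \nabla_\theta cl_m(\theta_0; y_i)$ are i.i.d., mean zero (differentiating the identity $\int f(y;\theta)\,dy=1$ inside the integral, justified by (A2) and (A4)), with covariance $J(\theta_0)$ by (A5), so the multivariate CLT gives $n^{-1/2}\, cl^{(1)}(\theta_0) \Rightarrow N(0, J(\theta_0))$. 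The dominating bound in (A4) delivers a uniform LLN for $n^{-1}\, cl^{(2)}(\theta)$ on $B_c(\theta_0)$ with limit $-H(\theta)$, and continuity of $H$ together with consistency of $\cmle$ (hence of $\tilde\theta_n$) yields $n^{-1}\, cl^{(2)}(\tilde\theta_n) \stackrel{p}{\to} -H(\theta_0)$. Invertibility of $H(\theta_0)$ from (A6) and Slutsky's theorem then give
\begin{equation*}
\sqrt{n}\bigl(\cmle - \theta_0\bigr) \;\Rightarrow\; H^{-1}(\theta_0)\, N\bigl(0, J(\theta_0)\bigr) \;=\; N\!\bigl(0,\; H^{-1}JH^{-1}\bigr) \;=\; N\!\bigl(0,\; G^{-1}(\theta_0)\bigr),
\end{equation*}
which is the stated $G^{-1/2}(\theta_0)\, Z$ representation.

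The main obstacle I anticipate is the consistency step, since no compactness of $\Omega$ is assumed: the MCLE must be identified as a local, not global, maximizer, and one must argue that with probability tending to one a root exists inside $B_c(\theta_0)$. This is where (A4) and (A6) do the real work — (A4) supplies the integrable envelope needed both for the uniform LLN on $cl^{(2)}$ and for the Taylor remainder, while (A6) ensures the limiting Hessian is nonsingular so that the expansion can be inverted. The CLT and Slutsky steps that produce the stated limiting distribution are then routine.
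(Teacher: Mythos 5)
Your proposal is correct in substance, but it follows a genuinely different route from the paper's proof in both of its main steps. For existence and consistency, you use the classical Wald--Cram\'er argument: Jensen/Kullback--Leibler applied to each marginal $\log f(y_{ij};\theta)$ (invoking (A1)--(A3)) makes $\theta_0$ a strict local maximizer of the population criterion, and a uniform LLN on $B_c(\theta_0)$ then yields a consistent root. The paper never uses identifiability at all: it instead proves $\sqrt{n}$-consistency \emph{directly}, writing $\theta-\theta_0=\xi M/\sqrt n$ and showing via the quadratic form $\xi^T b_n M+\xi^T B_n\xi M^2$ (with $b_n$ asymptotically Gaussian by the CLT and (A5), and $B_n\to -H(\theta_0)$ by the LLN, (A4) and (A6)) together with tightness of $\xi^T b_n$ in $C(S)$ that a local maximum exists in an $M_\epsilon/\sqrt n$ ball with probability at least $1-\epsilon$. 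Consequently, in the normality step the paper must use a \emph{second-order} Taylor expansion of the score with an integral remainder, bounding $\lvert n^{-1/2}\sum_i R_{n,i}\rvert$ by $\{\sqrt n\lVert\cmle-\theta_0\rVert_2^2\}\{n^{-1}\sum_i|M(y_i)|\}$ using the envelope in (A4) and the pre-established rate; your first-order mean-value expansion instead extracts the rate and the limit law simultaneously from consistency, the uniform LLN for $n^{-1}cl^{(2)}$, and invertibility of $H(\theta_0)$. Each approach buys something: yours makes actual use of the identifiability conditions (A1)--(A2), which in the paper's proof are vestigial, and is arguably the more standard textbook argument; the paper's avoids identifiability entirely (delivering existence of a $\sqrt n$-consistent root under weaker effective hypotheses) at the cost of the second-order remainder bookkeeping. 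One minor technical caveat in your write-up: the mean-value form $n^{-1}cl^{(2)}(\tilde\theta_n)$ with a \emph{single} intermediate point $\tilde\theta_n$ is not valid for the vector-valued score; you must apply the expansion row by row (with a different intermediate point per coordinate) or use the integral form $\int_0^1 n^{-1}cl^{(2)}(\theta_0+t(\cmle-\theta_0))\,dt$, which is harmless here precisely because your uniform LLN on $B_c(\theta_0)$ covers every row, but it should be stated.
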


\begin{proof}
The proof is divided into two main steps.   We first show that there exists a $\cmle$ which is of order $O(n^{-1/2}),$ and then we derive its asymptotic normality.    

Let $h(\theta; y) = cl(\theta; y).$   Note that for fixed $y,$ $h$ maps $\mathbb R^p$ into $\mathbb R$.  Then, by a Taylor expansion, we have that 
\begin{eqnarray*}
h(\theta; y)- h(\theta_0; y)&=&(\nabla h)(\theta_0; y)^T(\theta-\theta_0)+ (\theta-\theta_0)^T (Dh)(\theta^*; y) (\theta-\theta_0),
\end{eqnarray*}
where $\theta^*$ lies on a line joining $\theta$ and $\theta_0.$   We use $\nabla, D$ to denote the gradient and Hessian operators, respectively.  Our goal will be to show that there exists a $\theta$ in a $n^{-1/2}$ ball of $\theta_0,$ the left hand side of the above equation is negative.   This in turn will imply that there exists a CMLE which satisfies $\sqrt{n}(\cmle-\theta_0)=O_p(1).$ 

To this end, let $\theta-\theta_0=\xi M/\sqrt{n},$ with $||\xi||_2=1.$  Assume also that $||\theta-\theta_0||_2<c,$ that is, $M<c\sqrt{n}$. Then, by the above, we have
\begin{eqnarray}
&&\hspace{-2cm}\xi^T \left\{\frac{1}{\sqrt{n}}\sum_{i=1}^n (\nabla cl_m)(\theta_0,  y_i)\right\} + \xi^T \left\{\frac{1}{n}\sum_{i=1}^n (Hcl_m)(\theta^*,  y_i)\right\} \xi\notag\\\
&\equiv& \xi^T b_n M + \xi^T B_n \xi M^2,\label{line:quad}
\end{eqnarray}
where $b_n$ is a random vector converging to a mean-zero Gaussian RV, and $B_n$ is the random matrix converging to the negative definite matrix $-H(\theta_0)$.  The first of these follows by the central limit theorem, along with assumption (A5).   The second follows by applying the law of large numbers, along with assumptions (A4) and (A6).    Note that the second fact implies also that the eigenvalues of $B_n$ converge almost surely to the eigenvalues of $-H(\theta_0).$  

Let $\lambda^{(p)}_n$ denote the largest eigenvalue of $-B_n,$ and let $S=\{\xi:  ||\xi||_2=1\}.$    Since $b_n$ converges as a random Gaussian vector (with mean zero), and $\xi^Tb_n$ is uniformly continuous on $S,$ it follows that $\xi^Tb_n$ converges to a mean-zero Gaussian process in $C(S),$ the space of continuous functions on $S$ endowed with the uniform metric.   This implies that $\xi^Tb_n$ is tight in $C(S)$, and hence for all $\epsilon>0,$ there exists an $M_\epsilon,$ such that
\begin{eqnarray*}
\limsup_n P\left(\sup_{\xi \in S} \xi^T b_n/\lambda_n^{(p)} < M_\epsilon \right) &\geq& 1-\epsilon.
\end{eqnarray*}
Then, by \eqref{line:quad}, if $\xi^T b_n/\lambda_n^{(p)} < M,$ then $\xi^T b_n M + \xi^T B_n \xi M^2 <0,$  which in turn implies that 
\begin{eqnarray*}
\limsup_n P\left(\xi^T b_n M_\epsilon + \xi^T B_n \xi M_\epsilon^2 <0 \ \ \forall  \xi \in S\right) &\geq& 1-\epsilon.
\end{eqnarray*}

Note that if $\xi^T b_n M_\epsilon + \xi^T B_n \xi M_\epsilon^2 <0 \ \ \forall  \xi \in S,$
then, by the above and continuity of $cl_m,$ this implies that for sufficiently large $n,$ (with a probability of at least $1-\epsilon$) there exists at least one local maximum on the set $B_{M_\epsilon/\sqrt{n}}(\theta_0)\cap B_{c}(\theta_0).$  This implies that there exists a $\cmle$ which satisfies $\sqrt{n}(\cmle-\theta_0)=O_p(1).$

Let $g(\theta; y)=cl^{(1)}_m(\theta; y) =\nabla cl_m(\theta; y)$ (this is the vector of first derivatives), then using a multivariate Taylor expansion, we have that 
\begin{eqnarray*}
g(\cmle; y)&=&g(\theta_0; y)+ \sum_{|\alpha|\leq 1} (\partial^\alpha g)(\theta_0; y)(\cmle- \theta_0)^\alpha\\
&&  \ \ + \sum_{|\alpha|=2} \frac{2}{\alpha!}(\cmle- \theta_0)^\alpha \int_0^1 (1-t) (\partial^\alpha g)(\theta_0+t(\cmle-\theta_0); y)dt,
\end{eqnarray*}
again using the multi-index notation.  We take $\cmle$ to be the local maximizer found above.  
This time, for fixed $ y,$ $g$ maps $\mathbb R^p$ into $\mathbb R^p$, so we have chosen to bound the error term a little differently than above.   
We let $R_{n,i}$ denote the third term on the right hand side of this equation when $y$ is replaced with $y_i$.   Next, as by definition $\sum_{i=1}^n cl^{(1)}_m(\cmle; y_i)=0,$ we have that 
\begin{eqnarray}\label{line:taylor}
\frac{1}{\sqrt{n}}\sum_{i=1}^n(Dcl_m)(\theta; y_i)^T (\cmle- \theta_0) +\frac{1}{\sqrt{n}}\sum_{i=1}^nR_{n,i} &=& \frac{1}{\sqrt{n}}\sum_{i=1}^n f(\theta_0; y_i). \hspace{1cm}
\end{eqnarray}

By condition (A4), we have that 
\begin{eqnarray*}
&&\hspace{-2cm}\left|\sum_{|\alpha|=2} \frac{2}{\alpha!}(\cmle- \theta_0)^\alpha \int_0^1 (1-t) (D^\alpha g)(\theta_0+t(\cmle-\theta_0); y)dt\right|\\
&\leq& \sum_{|\alpha|=2}  \frac{1}{\alpha!}  |\cmle- \theta_0|^\alpha |M(y)|,
\end{eqnarray*}
from which it follows that, 
\begin{eqnarray*}
\left|\frac{1}{\sqrt{n}}\sum_{i=1}^nR_{n,i}\right| &\leq & \left\{\sqrt{n}||\cmle-\theta_0||_2^2\right\} \left\{ \frac{1}{n}\sum_{i=1}^n |M(y_i)|\right\}.
\end{eqnarray*}
The first term is then $o_p(1)$ by the first part of this proof, and by the law of large numbers (since $M$ is integrable), the second term is $O_p(1).$     Next, consider
\begin{eqnarray*}
\sqrt{n} \left\{\frac{1}{n}\sum_{i=1}^ncl^{(2)}_m(\theta; y_i)-H(\theta_0)\right\}(\cmle- \theta_0).
\end{eqnarray*}
By similar argument to that above, this is also $o_p(1).$  This allows us to re-write \eqref{line:taylor} as
\begin{eqnarray*}
\sqrt{n}H(\theta_0)(\cmle- \theta_0)  &=& \frac{1}{\sqrt{n}}\sum_{i=1}^n f(\theta_0; y_i) + o_p(1)
\end{eqnarray*}
A straightforward application of the central limit theorem shows that the term on the right hand side has a Gaussian limiting distribution with mean zero and variance $J(\theta_0).$   The full result follows.
\end{proof}

\bibliographystyle{ims}
\bibliography{multiple_arxive_REV}

\begin{thebibliography}{27}
\expandafter\ifx\csname natexlab\endcsname\relax\def\natexlab#1{#1}\fi
\expandafter\ifx\csname url\endcsname\relax
  \def\url#1{\texttt{#1}}\fi
\expandafter\ifx\csname urlprefix\endcsname\relax\def\urlprefix{URL }\fi

\bibitem[{Barua(2011)}]{baru}
\textsc{Barua, G. M.~N. B.~M., A.} (2011).
\newblock Prevalence of depressive disorders in the elderly \textbf{31}
  620--624.

\bibitem[{Benjamini and Hochberg(1995)}]{benjHoch}
\textsc{Benjamini, Y.} and \textsc{Hochberg, Y.} (1995).
\newblock Controlling the false discovery rate: a practical and powerful
  approach to multiple testing.
\newblock \textit{J. Roy. Statist. Soc. Ser. B} \textbf{57} 289--300.

\bibitem[{Besag(1974)}]{besa}
\textsc{Besag, J.} (1974).
\newblock Spatial interaction and the statistical analysis of lattice systems.
\newblock \textit{J. Roy. Statist. Soc. Ser. B} \textbf{36} 192--236.
\newblock With discussion by D. R. Cox, A. G. Hawkes, P. Clifford, P. Whittle,
  K. Ord, R. Mead, J. M. Hammersley, and M. S. Bartlett and with a reply by the
  author.

\bibitem[{Bretz et~al.(2010)Bretz, Hothorn and Westfall}]{bret}
\textsc{Bretz, F.}, \textsc{Hothorn, T.} and \textsc{Westfall, P.} (2010).
\newblock \textit{Multiple Comparisons Using R}.
\newblock Chapman and Hall/CRC Press, Boca Raton, Florida, USA.

\bibitem[{Cox and Reid(2004)}]{coxReid}
\textsc{Cox, D.~R.} and \textsc{Reid, N.} (2004).
\newblock A note on pseudolikelihood constructed from marginal densities.
\newblock \textit{Biometrika} \textbf{91} 729--737.

\bibitem[{Geys et~al.(1997)Geys, Molenberghs and Ryan}]{geysMole}
\textsc{Geys, H.}, \textsc{Molenberghs, G.} and \textsc{Ryan, L.~M.} (1997).
\newblock Pseudo-likelihood inference for clustered binary data.
\newblock \textit{Comm. Statist. Theory Methods} \textbf{26} 2743--2767.

\bibitem[{Hochberg and Tamhane(1987)}]{hoch}
\textsc{Hochberg, Y.} and \textsc{Tamhane, A.} (1987).
\newblock \textit{Multiple Comparison Procedures}.
\newblock New York: Willy.

\bibitem[{Holm(1979)}]{holm}
\textsc{Holm, S.} (1979).
\newblock A simple sequentially rejective multiple test procedure.
\newblock \textit{Scand. J. Statist.} \textbf{6} 65--70.

\bibitem[{Hommel(1988)}]{homm}
\textsc{Hommel, G.} (1988).
\newblock A stagewise rejective multiple test procedure based on a modified
  bonferroni test  383--386.

\bibitem[{Hothorn et~al.(2008{\natexlab{a}})Hothorn, Bretz and
  Westfall}]{hothBretWest}
\textsc{Hothorn, T.}, \textsc{Bretz, F.} and \textsc{Westfall, P.}
  (2008{\natexlab{a}}).
\newblock Simultaneous inference in general parametric models.
\newblock \textit{Biom. J.} \textbf{50} 346--363.

\bibitem[{Hothorn et~al.(2008{\natexlab{b}})Hothorn, Bretz, Westfall and
  Heiberger}]{hoth}
\textsc{Hothorn, T.}, \textsc{Bretz, F.}, \textsc{Westfall, P.} and
  \textsc{Heiberger, R.~M.} (2008{\natexlab{b}}).
\newblock multcomp: Simultaneous inference in general parametric models .

\bibitem[{Konietschke et~al.(2013)Konietschke, Bosiger, Brunner and
  Hothorn}]{koniBosiBrun}
\textsc{Konietschke, F.}, \textsc{Bosiger, S.}, \textsc{Brunner, E.} and
  \textsc{Hothorn, L.~A.} (2013).
\newblock Are multiple contrast tests superior to the anova?
\newblock \textit{Int. J. Biostat.} \textbf{9} 11.

\bibitem[{Konietschke et~al.(2012)Konietschke, Hothorn and
  Brunner}]{koniHothBrun}
\textsc{Konietschke, F.}, \textsc{Hothorn, L.~A.} and \textsc{Brunner, E.}
  (2012).
\newblock Rank-based multiple test procedures and simultaneous confidence
  intervals.
\newblock \textit{Electron. J. Stat.} \textbf{6} 738--759.

\bibitem[{Lindsay(1988)}]{lind}
\textsc{Lindsay, B.~G.} (1988).
\newblock Composite likelihood methods.
\newblock In \textit{Statistical inference from stochastic processes ({I}thaca,
  {NY}, 1987)}, vol.~80 of \textit{Contemp. Math.} Amer. Math. Soc.,
  Providence, RI, 221--239.

\bibitem[{Lisovskaja(2015)}]{lisoBurm}
\textsc{Lisovskaja, B. C.~F., V.} (2015).
\newblock A decision theoretic approach to optimization of multiple testing
  procedures \textbf{57} 64--75.

\bibitem[{Meijer(2015)}]{meijGoem}
\textsc{Meijer, G.~J.~J., R.~J.} (2015).
\newblock A multiple testing method for hypotheses structured in a directed
  acyclic graph \textbf{57} 123--143.

\bibitem[{Molenberghs and Ryan(1999)}]{moleRyan}
\textsc{Molenberghs, G.} and \textsc{Ryan, L.~M.} (1999).
\newblock An exponential family model for clustered multivariate binary data
  \textbf{10} 279--300.

\bibitem[{Renard et~al.(2004)Renard, Molenberghs and Geys}]{renaMole}
\textsc{Renard, D.}, \textsc{Molenberghs, G.} and \textsc{Geys, H.} (2004).
\newblock A pairwise likelihood approach to estimation in multilevel probit
  models.
\newblock \textit{Comput. Statist. Data Anal.} \textbf{44} 649--667.

\bibitem[{Ronning(1997)}]{Ronn}
\textsc{Ronning, G.} (1997).
\newblock A simple scheme for generating multivariate gamma distributions with
  non-negative covariance matrix \textbf{19} 179--183.

\bibitem[{Sch\'effe(1959)}]{sche}
\textsc{Sch\'effe} (1959).
\newblock \textit{The analysis of variance}.
\newblock Wiley, New York.

\bibitem[{Sidak(1968)}]{sida}
\textsc{Sidak, Z.} (1968).
\newblock On multivariate normal probabilities of rectangles: Their dependence
  on correlations.
\newblock \textit{Ann. Math. Statist.} \textbf{39} 1425--1434.

\bibitem[{Simes(1986)}]{sime}
\textsc{Simes, R.~J.} (1986).
\newblock An improved {B}onferroni procedure for multiple tests of
  significance.
\newblock \textit{Biometrika} \textbf{73} 751--754.

\bibitem[{Varin(2008)}]{vari}
\textsc{Varin, C.} (2008).
\newblock On composite marginal likelihoods.
\newblock \textit{AStA Adv. Stat. Anal.} \textbf{92} 1--28.

\bibitem[{Varin et~al.(2011)Varin, Reid and Firth}]{variReid}
\textsc{Varin, C.}, \textsc{Reid, N.} and \textsc{Firth, D.} (2011).
\newblock An overview of composite likelihood methods.
\newblock \textit{Statist. Sinica} \textbf{21} 5--42.

\bibitem[{Varin and Vidoni(2005)}]{variVido}
\textsc{Varin, C.} and \textsc{Vidoni, P.} (2005).
\newblock A note on composite likelihood inference and model selection.
\newblock \textit{Biometrika} \textbf{92} 519--528.

\bibitem[{Xu and Reid(2011)}]{xuReid}
\textsc{Xu, X.} and \textsc{Reid, N.} (2011).
\newblock On the robustness of maximum composite likelihood estimate.
\newblock \textit{J. Statist. Plann. Inference} \textbf{141} 3047--3054.

\bibitem[{Zhao and Joe(2005)}]{zhaoJoe}
\textsc{Zhao, Y.} and \textsc{Joe, H.} (2005).
\newblock Composite likelihood estimation in multivariate data analysis.
\newblock \textit{Canad. J. Statist.} \textbf{33} 335--356.

\end{thebibliography}

\end{document}